\newenvironment{alginc}[1][pseudocode]{\medskip\algsetlanguage{#1}\begin{algorithmic}[0]}{\end{algorithmic}\medskip}
\newcommand{\aTop}[2]{\begin{array}{c}{#1}\\{#2}\end{array}}
\newcommand{\C}{\mathbb C}
\newcommand{\N}{\mathbb N}
\newcommand{\R}{\mathbb R}
\newcommand{\Z}{\mathbb Z}
\newcommand{\rationals}{\mathbb Q}
\newcommand{\Hy}{\mathcal{H}}
\newcommand{\ringO}{\mathcal{O}}
\newcommand{\less}{\medspace \medspace < \medspace \medspace}
\newcommand{\mat}{\begin{pmatrix}a & b\\c & d\end{pmatrix}}
\newcommand{\suchthat}{ \medspace | \medspace}
\newcommand{\Real}{{\rm Re}}
\newcommand{\Imag}{{\rm Im}}
\newcommand*{\Homol}{\operatorname{H}}
\newcommand{\T}{{\mathbb{T}}}
\renewcommand{\leq}{\leqslant}
\renewcommand{\geq}{\geqslant}
\newtheorem{theorem}{\bfseries Theorem}
\newtheorem{proposition}[theorem]{\bfseries Proposition}
\newtheorem{lemma}[theorem]{\bfseries Lemma}
\theoremstyle{definition}
\newtheorem{definition}[theorem]{\bfseries Definition}
\newtheorem{notation}[theorem]{\bfseries Notation}
\newtheorem{observation}[theorem]{\bfseries Observation}
\newtheorem{remark}[theorem]{\bfseries Remark}
\begin{document}

\title[Higher torsion in the Abelianization of the full Bianchi groups]{Higher torsion in the Abelianization \\ of the full Bianchi groups}
\author{Alexander D. Rahm}
\email{Alexander.Rahm@nuigalway.ie}
\urladdr{http://www.maths.nuigalway.ie/\char126rahm/}
\address{National University of Ireland at Galway, Department of Mathematics}
\thanks{Funded by the Irish Research Council.}  
\subjclass[2010]{ 11F75, Cohomology of arithmetic groups. }
\date{\today}

\begin{abstract}
Denote by $\rationals(\sqrt{-m})$, with $m$ a square-free positive integer, an imaginary quadratic number field, and by $\ringO_{-m}$ its ring of integers.
The \emph{\mbox{Bianchi} groups} are the groups $\mathrm{SL_2}(\ringO_{-m})$.
In the literature, there has been so far no example of $p$-torsion in the integral homology of the full Bianchi groups,
 for $p$ a prime greater than the order of elements of finite order in the Bianchi group, which is at most $6$.

However, extending the scope of the computations,
we can observe examples of torsion in the integral homology of the quotient space,
at prime numbers as high as for instance $p = 80737$ at the discriminant $-1747$.
\end{abstract}

\maketitle


\section{Introduction} 
The \mbox{Bianchi} groups $\Gamma := \mathrm{SL_2}(\ringO_{-m})$
 may be considered as a key to the study of a larger class of groups,
 the \emph{Kleinian} groups, which date back to work of Henri Poincar\'e~\cite{Poincare}.
In fact, each non-co-compact arithmetic Kleinian group is commensurable with some \mbox{Bianchi} group~\cite{MaclachlanReid}.
A wealth of information on the \mbox{Bianchi} groups can be found in the monographs \cite{Fine},
 \cite{ElstrodtGrunewaldMennicke}, \cite{MaclachlanReid}.
In the literature, there has been so far no example of $p$-torsion in the integral homology of the full Bianchi groups,
 for $p$ a prime greater than the order of elements of finite order in the Bianchi group
(a recent survey of relevant calculations has been given in \cite{Sengun_survey}).
In fact, the numerical studies that have been made so far,
 were carried out in the range where the quotient space of hyperbolic $3$-space $\Hy$ by the Bianchi group 
 is often homotopy equivalent to a wedge sum of $2$-spheres, $2$-tori and M\"obius bands \cite{Vogtmann}.

We make use of Serre's decomposition \cite{Serre} of the homology group $\Homol_1(_\Gamma\backslash \Hy; \thinspace \Z)$
 into the direct sum of the free Abelian group with one generator for each element of the class group of~$\ringO_{-m}$
and the group $\Homol_1^{\rm \overline{cusp}}(_\Gamma\backslash \Hy; \thinspace \Z)$
 computed in figures~\ref{small absolute values} and~\ref{greater absolute values}.
The first compuations of $\Homol_1(\Gamma; \thinspace \Z) \supset \Homol_1(_\Gamma\backslash \Hy; \thinspace \Z)$
by Swan~\cite{Swan} were on a range of Bianchi groups with vanishing cusp-complementary homology
 $\Homol_1^{\rm \overline{cusp}}(_\Gamma\backslash \Hy; \thinspace \Z)$. The first example where
$\Homol_1^{\rm \overline{cusp}}(_\Gamma\backslash \Hy; \thinspace \Z)$ is non-zero,
occurred in an unpublished calculation of \mbox{Mennicke}.
Swan's manual computations of group presentations have been extended on the computer by Riley~\cite{Riley};
and later Vogtmann~\cite{Vogtmann} and Scheutzow~\cite{Scheutzow} systematically computed 
$\Homol_1^{\rm \overline{cusp}}(_\Gamma\backslash \Hy; \thinspace \rationals)$ for a large range of Bianchi groups.
But they were still in in the range where $_\Gamma\backslash \Hy$ admits no homological torsion.
Aranes~\cite{Aranes} has computed cell complexes for the Bianchi groups for all $m \leq 100$,
 and Yasaki~\cite{Yasaki} has obtained $\mathrm{GL_2}(\ringO_{-m})$-cell complexes (with the Vorono\"i model)
for the same range as well as all cases where $\ringO$ is of class number $1$ or $2$.
This includes two cases, $m = 74$ and $m = 86$, where some $2$-torsion appears in 
$\Homol_1^{\rm \overline{cusp}}(_\Gamma\backslash \Hy; \thinspace \Z)$,
 but the latter two authors have not yet provided homology computations.
When the absolute value of the discriminant gets greater,
 torsion in the integral homology of the quotient space appears (see figure~\ref{greater absolute values})
at prime numbers as high as for instance $80737$ at the discriminant $-1747$, 
whereas the order  of elements of finite order in $\mathrm{SL_2}(\ringO_{-m})$ is at most~$6$.
A growth of the torsion in the Abelianization of the Bianchi groups with respect to the covolume can be observed,
which is in concordance with the predictions of \cite{BergeronVenkatesh}.
We can also observe that the occurring torsion subgroups are quite likely to occur as squares,
but this is no general principle, because the discriminant $-431$ produces a counterexample to this phenomenon.

In order to obtain the results of figures~\ref{small absolute values} and~\ref{greater absolute values},
in section~\ref{realization}
 we fill out Swan's concept \cite{Swan} and elaborate algorithms to compute a fundamental polyhedron for the action of the Bianchi groups on
hyperbolic $3$-space.
Other algorithms based on the same concept have independently been implemented by Cremona~\cite{Cremona}
 for the five cases where~$\ringO_{-m}$ is Euclidean, and by his students Whitley~\cite{Whitley}
 for the non-Euclidean principal ideal domain cases, Bygott~\cite{Bygott} for a case of class number 2 and Lingham (\cite{Lingham},
 used in~\cite{CremonaLingham}) for some cases of class number 3; and finally Aran\'es~\cite{Aranes} for arbitrary class numbers.
The algorithms presented in subsection \ref{realization} come with an implementation~\cite{BianchiGP}
 for all Bianchi groups; and we make explicit use of the cell complexes it produces.
The provided implementation~\cite{BianchiGP} has been validated by the project PLUME of the CNRS,
and is subject to the certificate C3I of the GENCI and the CPU. 
Other results obtained with the employed implementation are described in~\cite{RahmNoteAuxCRAS} and \cite{RahmSengun}.
On the computing clusters of the Weizmann Institute of Science,
 this implementation has been applied to establish a database of cell complexes for over $180$ Bianchi groups,
 using over fifty processor-months.
This database includes all the cases of ideal class numbers $3$ and $5$, most of the cases of ideal class number $4$ 
and all of the cases of discriminant absolute value bounded by the number $500$.

A computational advantage is the shortcut that we obtain in section~\ref{connection}
 by linking the Borel--Serre compactification of the quotient space with Fl\"oge's compactification
in a long exact sequence, based on the recent paper~\cite{On a question of Serre}.
Fl\"oge's compactification admits a computationally easier cell structure,
 and we can explicitly calculate the equivariant Leray--Serre spectral sequence associated to it.
In section~\ref{SpecSeq}, we describe how to assemble the homology of the Borel--Serre compactified quotient space 
and the Farrell cohomology of a Bianchi group to its full group homology with trivial $\Z$--coefficients.
Here, we divide by the center of $\mathrm{SL_2}(\ringO_{-m})$, consisting of plus and minus the identity matrix,
yielding $\mathrm{PSL_2}(\ringO_{-m})$.
As the center of $\mathrm{SL_2}(\ringO_{-m})$ is the kernel of its action on hyperbolic $3$-space,
this does not change the quotient space.
And for $\Gamma := \mathrm{PSL_2}(\ringO_{-m})$,
 general formulae for its Farrell cohomology have been given~\cite{accessing Farrell cohomology}
(based on~\cite{homological torsion}).

\subsection{Organization of the paper} 
We print the isomorphism types of 
$\Homol_1^{\rm \overline{cusp}}(_\Gamma\backslash \Hy; \thinspace \Z)$ that were obtained
in figures~\ref{small absolute values} and~\ref{greater absolute values}.
The homology group $\Homol_1(_\Gamma\backslash \Hy; \thinspace \Z)$
is a direct sum of the former and the free Abelian group with rank the cardinality of the class group of~$\ringO_{-m}$,
which we also print. 
There is an inclusion of $\Homol_1(_\Gamma\backslash \Hy; \thinspace \Z)$ into the group homology $\Homol_1(\Gamma; \thinspace \Z)$;
and the latter group homology is a quotient of the direct sum of $\Homol_1(_\Gamma\backslash \Hy; \thinspace \Z)$
and the Farrell supplement that has been computed and printed in a separate column.
In section~\ref{The Bianchi fundamental polyhedron}, we define the Bianchi fundamental polyhedron,
which induces our cell structure on $_\Gamma\backslash \Hy$.
 We use it in section~\ref{Floege cellular complex} to obtain the Fl\"oge cellular complex, 
which we connect in section~\ref{connection} to the Borel--Serre compactification of $_\Gamma\backslash \Hy$.
Then we proceed to $\Homol_1(\mathrm{PSL_2}(\ringO_{-m}); \thinspace \Z)$ in section~\ref{SpecSeq},
describe Swan's concept in section~\ref{Swan's concept} and its realization in section~\ref{realization}.

\begin{figure}\scriptsize
\mbox{
 $\begin{array}
  {|c|c|c|c|c|}
\hline 
\Delta & m & { \aTop{\rm class}{\rm group}}
 & \Homol_1^{\rm \overline{cusp}}
 & \aTop{\rm Farrell}{\rm supplement} \\
\hline 
 -7 & 7 & \{1\} & 0  & \Z/2  \\
 -8 & 2 & \{1\} & 0  & \Z/2 \oplus \Z/3 \\
 -11 & 11 & \{1\} & 0  & \Z/3 \\
 -15 & 15 & \Z/2 & 0  & \Z/2 \oplus \Z/3 \\
 -19 & 19 & \{1\} & 0  & 0  \\
 -20 & 5 & \Z/2 & 0  & (\Z/2)^{2} \oplus \Z/3 \\
 -23 & 23 & \Z/3 & 0  & \Z/2 \oplus \Z/3 \\
 -24 & 6 & \Z/2 & 0  & \Z/2 \oplus \Z/3 \\
 -31 & 31 & \Z/3 & 0  & \Z/2  \\
 -35 & 35 & \Z/2 & \Z  & \Z/2 \oplus \Z/3 \\
 -39 & 39 & \Z/4 & 0  & \Z/2 \oplus \Z/3 \\
 -40 & 10 & \Z/2 & \Z  & (\Z/2)^{2} \oplus \Z/3 \\
 -43 & 43 & \{1\} & \Z  & 0  \\
 -47 & 47 & \Z/5 & 0  & \Z/2 \oplus \Z/3 \\
 -51 & 51 & \Z/2 & \Z  & \Z/3 \\
 -52 & 13 & \Z/2 & \Z  & (\Z/2)^{2}  \\
 -55 & 55 & \Z/4 & \Z  & \Z/2 \oplus \Z/3 \\
 -56 & 14 & \Z/4 & \Z  & (\Z/2)^{2} \oplus \Z/3 \\
 -59 & 59 & \Z/3 & \Z  & \Z/3 \\
 -67 & 67 & \{1\} & \Z^{2}  & 0  \\
 -68 & 17 & \Z/4 & \Z  & (\Z/2)^{2} \oplus \Z/3 \\
 -71 & 71 & \Z/7 & 0  & \Z/2 \oplus \Z/3 \\
 -79 & 79 & \Z/5 & \Z  & (\Z/2)^{3}  \\
 -83 & 83 & \Z/3 & \Z^{2}  & \Z/3 \\
 -84 & 21 & \Z/2 \times \Z/2 & \Z^{3}  & (\Z/2)^{3} \oplus (\Z/3)^{2} \\
 -87 & 87 & \Z/6 & \Z^{2}  & \Z/2 \oplus \Z/3 \\
 -88 & 22 & \Z/2 & \Z^{3}  & \Z/2 \oplus \Z/3 \\
 -91 & 91 & \Z/2 & \Z^{3}  & \Z/2  \\
 -95 & 95 & \Z/8 & \Z  & \Z/2 \oplus \Z/3 \\
 -103 & 103 & \Z/5 & \Z^{2}  & \Z/2  \\
 -104 & 26 & \Z/6 & \Z^{2}  & (\Z/2)^{2} \oplus (\Z/3)^{2} \\
 -107 & 107 & \Z/3 & \Z^{3}  & (\Z/3)^{3} \\
 -111 & 111 & \Z/8 & \Z^{2}  & \Z/2 \oplus \Z/3 \\
 -115 & 115 & \Z/2 & \Z^{5}  & \Z/2 \oplus \Z/3 \\
 -116 & 29 & \Z/6 & \Z^{3}  & (\Z/2)^{2} \oplus \Z/3 \\
 -119 & 119 & \Z/10 & \Z  & (\Z/2)^{2} \oplus \Z/3 \\
 -120 & 30 & \Z/2 \times \Z/2 & \Z^{6}  & (\Z/2)^{3} \oplus (\Z/3)^{3} \\
 -123 & 123 & \Z/2 & \Z^{5}  & \Z/3 \\
 -127 & 127 & \Z/5 & \Z^{3}  & \Z/2  \\
 -131 & 131 & \Z/5 & \Z^{3}  & \Z/3 \\
 -132 & 33 & \Z/2 \times \Z/2 & \Z^{6}  & (\Z/2)^{3} \oplus (\Z/3)^{4} \\
 -136 & 34 & \Z/4 & \Z^{4}  & (\Z/2)^{4} \oplus \Z/3 \\
 -139 & 139 & \Z/3 & \Z^{4}  & 0  \\
 -143 & 143 & \Z/10 & \Z^{2}  & \Z/2 \oplus (\Z/3)^{2} \\
 -148 & 37 & \Z/2 & \Z^{6}  & (\Z/2)^{4}  \\
 -151 & 151 & \Z/7 & \Z^{3}  & \Z/2  \\
 -152 & 38 & \Z/6 & \Z^{4}  & \Z/2 \oplus \Z/3 \\
 -155 & 155 & \Z/4 & \Z^{6}  & \Z/2 \oplus \Z/3 \\
 -159 & 159 & \Z/10 & \Z^{4}  & \Z/2 \oplus \Z/3 \\
 -163 & 163 & \{1\} & \Z^{6}  & 0  \\
 -164 & 41 & \Z/8 & \Z^{4}  & (\Z/2)^{2} \oplus \Z/3 \\
 -167 & 167 & \Z/11 & \Z^{2}  & \Z/2 \oplus \Z/3 \\
 -168 & 42 & \Z/2 \times \Z/2 & \Z^{9}  & (\Z/2)^{3} \oplus (\Z/3)^{2} \\
 -179 & 179 & \Z/5 & \Z^{5}  & \Z/3 \\
 -183 & 183 & \Z/8 & \Z^{6}  & \Z/2 \oplus \Z/3 \\
 -184 & 46 & \Z/4 & \Z^{7}  & (\Z/2)^{2} \oplus \Z/3 \\
 -187 & 187 & \Z/2 & \Z^{7}  & \Z/3 \\
 -191 & 191 & \Z/13 & \Z^{2}  & \Z/2 \oplus \Z/3 \\
 -195 & 195 & \Z/2 \times \Z/2 & \Z^{11}  & (\Z/2)^{2} \oplus (\Z/3)^{2} \\
 -199 & 199 & \Z/9 & \Z^{4}  & \Z/2  \\
 -203 & 203 & \Z/4 & \Z^{8}  & \Z/2 \oplus \Z/3 \\
 -211 & 211 & \Z/3 & \Z^{7}  & 0  \\
 -212 & 53 & \Z/6 & \Z^{8}  & (\Z/2)^{2} \oplus \Z/3 \\
 -215 & 215 & \Z/14 & \Z^{4}  & \Z/2 \oplus \Z/3 \\
\hline 
  \end{array}$
 $\begin{array}
  {|c|c|c|c|c|}
\hline
\Delta & m & { \aTop{\rm class}{\rm group}}
 & \Homol_1^{\rm \overline{cusp}}
 & \aTop{\rm Farrell}{\rm supplement} \\
\hline &&&&\\
 -219 & 219 & \Z/4 & \Z^{9}  & \Z/2 \oplus \Z/3 \\
 -223 & 223 & \Z/7 & \Z^{8}  & (\Z/2)^{3}  \\
 -227 & 227 & \Z/5 & \Z^{7}  & \Z/3 \\
 -228 & 57 & \Z/2 \times \Z/2 & \Z^{12}  & (\Z/2)^{3} \oplus (\Z/3)^{2} \\
 -231 & 231 & \Z/6 \times \Z/2 & \Z^{9}  & (\Z/2)^{2} \oplus (\Z/3)^{2} \\
 -232 & 58 & \Z/2 & \Z^{10}  & (\Z/2)^{2} \oplus \Z/3 \\
 -235 & 235 & \Z/2 & \Z^{11}  & (\Z/2)^{3} \oplus \Z/3 \\
 -239 & 239 & \Z/15 & \Z^{3}  & \Z/2 \oplus \Z/3 \\
 -244 & 61 & \Z/6 & \Z^{9}  & (\Z/2)^{2}  \\
 -247 & 247 & \Z/6 & \Z^{8}  & \Z/2  \\
 -248 & 62 & \Z/8 & \Z^{8}  & (\Z/2)^{2} \oplus \Z/3 \\
 -251 & 251 & \Z/7 & \Z^{7}  & \Z/3 \\
 -255 & 255 & \Z/6 \times \Z/2 & \Z^{11}  & (\Z/2)^{2} \oplus (\Z/3)^{3} \\
 -259 & 259 & \Z/4 & \Z^{10}  & \Z/2 \oplus \Z/3 \\
 -260 & 65 & \Z/4 \times \Z/2 & \Z^{12}  & (\Z/2)^{5} \oplus (\Z/3)^{2} \\
 -263 & 263 & \Z/13 & \Z^{5}  & \Z/2 \oplus \Z/3 \\
 -264 & 66 & \Z/4 \times \Z/2 & \Z^{12}  & (\Z/2)^{2} \oplus (\Z/3)^{3} \\
 -267 & 267 & \Z/2 & \Z^{13}  & \Z/3 \\
 -271 & 271 & \Z/11 & \Z^{6}  & \Z/2  \\
 -276 & 69 & \Z/4 \times \Z/2 & \Z^{15}  & (\Z/2)^{3} \oplus (\Z/3)^{2} \\
 -280 & 70 & \Z/2 \times \Z/2 & \Z^{15}  & (\Z/2)^{3} \oplus (\Z/3)^{2} \\
 -283 & 283 & \Z/3 & \Z^{10}  & 0  \\
 -287 & 287 & \Z/14 & \Z^{7}  & (\Z/2)^{2} \oplus \Z/3 \\
 -291 & 291 & \Z/4 & \Z^{13}  & \Z/2 \oplus \Z/3 \\
 -292 & 73 & \Z/4 & \Z^{12}  & (\Z/2)^{2} \oplus \Z/3 \\
 -295 & 295 & \Z/8 & \Z^{11}  & \Z/2 \oplus \Z/3 \\
 -296 & 74 & \Z/10 & \Z^{9}  \oplus (\Z/2)^2 & (\Z/2)^{2} \oplus (\Z/3)^{2} \\
 -299 & 299 & \Z/8 & \Z^{10}  & \Z/2 \oplus (\Z/3)^{4} \\
 -303 & 303 & \Z/10 & \Z^{12}  & \Z/2 \oplus \Z/3 \\
 -307 & 307 & \Z/3 & \Z^{11}  & 0  \\
 -308 & 77 & \Z/4 \times \Z/2 & \Z^{15}  & (\Z/2)^{3} \oplus (\Z/3)^{2} \\
 -311 & 311 & \Z/19 & \Z^{4}  & \Z/2 \oplus \Z/3 \\
 -312 & 78 & \Z/2 \times \Z/2 & \Z^{18}  & (\Z/2)^{3} \oplus (\Z/3)^{2} \\
 -319 & 319 & \Z/10 & \Z^{10}  & \Z/2 \oplus \Z/3 \\
 -323 & 323 & \Z/4 & \Z^{12}  & \Z/2 \oplus \Z/3 \\
 -327 & 327 & \Z/12 & \Z^{12}  & \Z/2 \oplus \Z/3 \\
 -328 & 82 & \Z/4 & \Z^{13}  & (\Z/2)^{3} \oplus \Z/3 \\
 -331 & 331 & \Z/3 & \Z^{12}  & \Z/3 \\
 -335 & 335 & \Z/18 & \Z^{8}  & \Z/2 \oplus \Z/3 \\
 -339 & 339 & \Z/6 & \Z^{15}  & \Z/3 \\
 -340 & 85 & \Z/2 \times \Z/2 & \Z^{19}  & (\Z/2)^{4} \oplus (\Z/3)^{2} \\
 -344 & 86 & \Z/10 & \Z^{11}  \oplus (\Z/2)^2 & \Z/2 \oplus \Z/3 \\
 -347 & 347 & \Z/5 & \Z^{12}  & \Z/3 \\
 -355 & 355 & \Z/4 & \Z^{16}  & \Z/2 \oplus \Z/3 \\
 -356 & 89 & \Z/12 & \Z^{12}  & (\Z/2)^{2} \oplus \Z/3 \\
 -359 & 359 & \Z/19 & \Z^{6}  \oplus (\Z/2)^2 & (\Z/2)^{3} \oplus \Z/3 \\
 -367 & 367 & \Z/9 & \Z^{11}  \oplus (\Z/3)^2 & \Z/2 \oplus \Z/3 \\
 -371 & 371 & \Z/8 & \Z^{14}  & \Z/2 \oplus \Z/3 \\
 -372 & 93 & \Z/2 \times \Z/2 & \Z^{23}  & (\Z/2)^{3} \oplus (\Z/3)^{2} \\
 -376 & 94 & \Z/8 & \Z^{14}  & (\Z/2)^{2} \oplus \Z/3 \\
 -379 & 379 & \Z/3 & \Z^{14}  & 0  \\
 -383 & 383 & \Z/17 & \Z^{8}  & \Z/2 \oplus \Z/3 \\
 -388 & 97 & \Z/4 & \Z^{17}  & (\Z/2)^{2} \oplus \Z/3 \\
 -391 & 391 & \Z/14 & \Z^{11}  & (\Z/2)^{2} \oplus \Z/3 \\
 -395 & 395 & \Z/8 & \Z^{16}  \oplus (\Z/2)^2 & \Z/2 \oplus \Z/3 \\
 -399 & 399 & \Z/8 \times \Z/2 & \Z^{17}  & (\Z/2)^{4} \oplus (\Z/3)^{2} \\
 -403 & 403 & \Z/2 & \Z^{17}  & \Z/2  \\
 -404 & 101 & \Z/14 & \Z^{14}  & (\Z/2)^{4} \oplus \Z/3 \\
 -407 & 407 & \Z/16 & \Z^{13}  & \Z/2 \oplus (\Z/3)^{2} \\
 -408 & 102 & \Z/2 \times \Z/2 & \Z^{23}  & (\Z/2)^{2} \oplus (\Z/3)^{6} \\
 -411 & 411 & \Z/6 & \Z^{19}  & \Z/3 \\
 -415 & 415 & \Z/10 & \Z^{18}  & \Z/2 \oplus \Z/3 \\
 -419 & 419 & \Z/9 & \Z^{13}  & (\Z/3)^{3} \\
\hline 
  \end{array}$\normalsize
}
\caption{The cusp-complementary homology $\Homol_1^{\rm \overline{cusp}}\left(_\Gamma\backslash \Hy; \thinspace \Z \right)$
 for the absolute values of the discriminant $\Delta$ fulfilling $|\Delta| \leq 415$.}
\label{small absolute values}
\end{figure}
\begin{figure}\scriptsize
 $$\begin{array}
  {|c|c|c|c|c|}
\hline 
{\rm Discriminant} & m & { {\rm class}\medspace{\rm group}}
 &\Homol_1^{\rm \overline{cusp}}(_\Gamma\backslash \Hy; \thinspace \Z)
 & \aTop{\rm Farrell}{\rm supplement} \\ 
\hline&&&&\\
 -420 & 105 & \Z/2 \times \Z/2 \times \Z/2 & \Z^{33}  & (\Z/2)^{8} \oplus (\Z/3)^{4} \\
 -424 & 106 & \Z/6 & \Z^{17}  \oplus (\Z/2)^2 & (\Z/2)^{2} \oplus \Z/3 \\
 -427 & 427 & \Z/2 & \Z^{19}  & (\Z/2)^{3}  \\
 -431 & 431 & \Z/21 & \Z^{8}  \oplus \Z/2 & \Z/2 \oplus \Z/3 \\
 -435 & 435 & \Z/2 \times \Z/2 & \Z^{27}  & (\Z/2)^{2} \oplus (\Z/3)^{6} \\
 -436 & 109 & \Z/6 & \Z^{19}  \oplus (\Z/2)^2 & (\Z/2)^{2}  \\
 -439 & 439 & \Z/15 & \Z^{11}  & (\Z/2)^{5}  \\
 -440 & 110 & \Z/6 \times \Z/2 & \Z^{20}  & (\Z/2)^{3} \oplus (\Z/3)^{2} \\
 -443 & 443 & \Z/5 & \Z^{16}  & \Z/2 \oplus \Z/3 \\
 -447 & 447 & \Z/14 & \Z^{18}  & \Z/2 \oplus \Z/3 \\
 -451 & 451 & \Z/6 & \Z^{17}  & \Z/3 \\
 -452 & 113 & \Z/8 & \Z^{19}  \oplus (\Z/2)^2 & (\Z/2)^{2} \oplus \Z/3 \\
 -455 & 455 & \Z/10 \times \Z/2 & \Z^{19}  \oplus (\Z/2)^2 & (\Z/2)^{2} \oplus (\Z/3)^{2} \\
 -456 & 114 & \Z/4 \times \Z/2 & \Z^{24}  \oplus (\Z/2)^2 & (\Z/2)^{2} \oplus (\Z/3)^{4} \\
 -463 & 463 & \Z/7 & \Z^{16}  & \Z/2  \\
 -467 & 467 & \Z/7 & \Z^{16}  & \Z/3 \\
 -471 & 471 & \Z/16 & \Z^{18}  & \Z/2 \oplus \Z/3 \\
 -472 & 118 & \Z/6 & \Z^{19}  \oplus (\Z/2)^2 & \Z/2 \oplus \Z/3 \\
 -479 & 479 & \Z/25 & \Z^{8}  \oplus (\Z/3)^2 & \Z/2 \oplus \Z/3 \\
 -483 & 483 & \Z/2 \times \Z/2 & \Z^{29}  & (\Z/2)^{2} \oplus (\Z/3)^{2} \\
 -487 & 487 & \Z/7 & \Z^{17}  \oplus (\Z/13)^2 & \Z/2  \\
 -488 & 122 & \Z/10 & \Z^{18}  \oplus (\Z/2)^2 & (\Z/2)^{2} \oplus (\Z/3)^{2} \\
 -491 & 491 & \Z/9 & \Z^{16}  & \Z/3 \\
 -499 & 499 & \Z/3 & \Z^{19}  \oplus (\Z/3)^2 & (\Z/2)^{2}  \\
 -520 & 130 & \Z/2 \times \Z/2 & \Z^{28}  \oplus (\Z/2)^2 & (\Z/2)^{4} \oplus (\Z/3)^{2} \\
 -523 & 523 & \Z/5 & \Z^{19}  & 0  \\
 -532 & 133 & \Z/2 \times \Z/2 & \Z^{29}  & (\Z/2)^{3} \oplus (\Z/3)^{2} \\
 -547 & 547 & \Z/3 & \Z^{21}  \oplus (\Z/2)^2 & (\Z/3)^{2} \\
 -555 & 555 & \Z/2 \times \Z/2 & \Z^{35}  & (\Z/2)^{2} \oplus (\Z/3)^{2} \\
 -568 & 142 & \Z/4 & \Z^{25}  \oplus (\Z/2)^2 & (\Z/2)^{6} \oplus \Z/3 \\
 -571 & 571 & \Z/5 & \Z^{23}  & 0  \\
 -595 & 595 & \Z/2 \times \Z/2 & \Z^{33}  & (\Z/2)^{2} \oplus (\Z/3)^{4} \\
 -619 & 619 & \Z/5 & \Z^{23}  \oplus (\Z/3)^2 & 0  \\
 -627 & 627 & \Z/2 \times \Z/2 & \Z^{35}  & (\Z/3)^{2} \\
 -643 & 643 & \Z/3 & \Z^{27}  & \Z/3 \\
 -667 & 667 & \Z/4 & \Z^{28}  & \Z/2 \oplus \Z/3 \\
 -683 & 683 & \Z/5 & \Z^{26}  & \Z/3 \\
 -691 & 691 & \Z/5 & \Z^{26}  \oplus (\Z/7)^2 & 0  \\
 -696 & 174 & \Z/6 \times \Z/2 & \Z^{38}  & (\Z/2)^{3} \oplus (\Z/3)^{3} \\
 -715 & 715 & \Z/2 \times \Z/2 & \Z^{39}  & (\Z/2)^{2} \oplus (\Z/3)^{2} \\
 -723 & 723 & \Z/4 & \Z^{37}  \oplus (\Z/2)^2 & \Z/2 \oplus \Z/3 \\
 -739 & 739 & \Z/5 & \Z^{28}  & 0  \\
 -760 & 190 & \Z/2 \times \Z/2 & \Z^{42}  \oplus (\Z/2)^2 & (\Z/2)^{3} \oplus (\Z/3)^{2} \\
 -763 & 763 & \Z/4 & \Z^{34}  & \Z/2 \oplus \Z/3 \\
 -787 & 787 & \Z/5 & \Z^{30}  & 0  \\
 -795 & 795 & \Z/2 \times \Z/2 & \Z^{51}  & (\Z/2)^{2} \oplus (\Z/3)^{3} \\
 -883 & 883 & \Z/3 & \Z^{35}  & 0\\
 -907 & 907 & \Z/3 & \Z^{36}  \oplus (\Z/13)^2 & 0  \\
 -947 & 947 & \Z/5 & \Z^{37}  \oplus (\Z/89)^2 & \Z/3 \\
 -955 & 955 & \Z/4 & \Z^{46}  \oplus (\Z/2)^4 \oplus (\Z/3)^2 & \Z/2 \oplus \Z/3 \\
 -1003 & 1003 & \Z/4 & \Z^{44}  \oplus (\Z/3)^2 & \Z/2 \oplus \Z/3 \\
 -1027 & 1027 & \Z/4 & \Z^{44}  \oplus (\Z/2)^2 & \Z/2 \oplus (\Z/3)^{3} \\
 -1051 & 1051 & \Z/5 & \Z^{43}  \oplus (\Z/13)^2 & 0  \\
 -1123 & 1123 & \Z/5 & \Z^{44}  \oplus (\Z/7)^2 & 0  \\
 -1227 & 1227 & \Z/4 & \Z^{65}  \oplus (\Z/2^2)^2 & \Z/2 \oplus \Z/3 \\
 -1243 & 1243 & \Z/4 & \Z^{54}  \oplus (\Z/3)^4 & \Z/2 \oplus \Z/3 \\
 -1387 & 1387 & \Z/4 & \Z^{58}  \oplus (\Z/167)^2 & \Z/2 \oplus (\Z/3)^{3} \\
 -1411 & 1411 & \Z/4 & \Z^{60}  \oplus (\Z/2^4)^2 \oplus (\Z/43)^2 & \Z/2 \oplus \Z/3 \\
 -1507 & 1507 & \Z/4 & \Z^{66}  \oplus (\Z/3)^2 \oplus (\Z/5)^4 & \Z/2 \oplus \Z/3 \\
 -1555 & 1555 & \Z/4 & \Z^{76}  \oplus (\Z/2^2)^8 \oplus (\Z/11)^2 & \Z/2 \oplus \Z/3 \\
 -1723 & 1723 & \Z/5 & \Z^{69}  \oplus (\Z/7)^2 \oplus (\Z/23)^2 \oplus (\Z/883)^2 & 0  \\
 -1747 & 1747 & \Z/5 & \Z^{70}  \oplus (\Z/80737)^2 & (\Z/3)^{2} \\
 -1867 & 1867 & \Z/5 & \Z^{75}  \oplus (\Z/2)^4 \oplus (\Z/7^2)^2 \oplus (\Z/137)^2  & 0  \\
\hline 
  \end{array}$$\normalsize
\caption{\hfill $\Homol_1^{\rm \overline{cusp}}(_\Gamma\backslash \Hy; \thinspace \Z)$,
with its torsion decomposed into prime power factors, for some greater absolute values of the discriminant.}
\label{greater absolute values}
\end{figure}

\section{The Bianchi fundamental polyhedron} \label{The Bianchi fundamental polyhedron}

Let $m$ be a squarefree positive integer and consider the imaginary quadratic number field $\rationals(\sqrt{-m})$
 with ring of integers $\mathcal{O}_{-m}$, which we also just denote by $\mathcal{O}$. 
Consider the familiar action by fractional linear transformations
 (we give an explicit formula for it in lemma~\ref{operationFormula})
 of the group \mbox{$\Gamma := \mathrm{SL_2}(\mathcal{O}) \subset \mathrm{GL}_2(\C)$} on hyperbolic three-space,
 for which we will use the upper-half space model $\Hy$. 
As a set, $$ \Hy = \{ (z,\zeta) \in \C \times \R \medspace | \medspace \zeta > 0 \}. $$

The Bianchi--Humbert theory \cite{Bianchi}, \cite{Humbert} gives a fundamental domain for this action.
We will start by giving a geometric description of it, and the arguments why it is a fundamental domain. \\
\begin{definition}
A pair of elements $(\mu, \lambda) \in \mathcal{O}^2$ is called \emph{unimodular} if the ideal sum $\mu \mathcal{O} +\lambda \mathcal{O}$ equals $\mathcal{O}$.
\end{definition}
The boundary of $\Hy$ is the Riemann sphere $\partial \Hy = \C \cup \{ \infty \}$ (as a set), which contains the complex plane $\C$.
The totally geodesic surfaces in $\Hy$ are the Euclidean vertical planes (we define \emph{vertical} as orthogonal to the complex plane) and the Euclidean hemispheres centred on the complex plane.
\begin{notation} \label{hemispheres}
Given a unimodular pair $(\mu$, $\lambda) \in \mathcal{O}^2$ with $\mu \neq 0$, let $S_{\mu,\lambda} \subset \Hy$ denote the hemisphere given by the equation $|\mu z -\lambda|^2 +|\mu|^2\zeta^2 = 1$.

This hemisphere has centre $\lambda/\mu$ on the complex plane $\C$, and radius $1/|\mu|$.
\label{B}
Let 
\\ $
B:=\bigl\{(z,\zeta) \in \Hy$:
The inequality $|\mu z -\lambda|^2 +|\mu|^2\zeta^2 \geq 1$ 
\begin{flushright} is fulfilled for all unimodular pairs $(\mu$, $\lambda) \in \mathcal{O}^2$ with $\mu \neq 0$ $\bigr\}$.
\end{flushright}
Then $B$ is the set of points in $\Hy$ which lie above or on all hemispheres $S_{\mu,\lambda}$.
\end{notation}

\begin{lemma}[\cite{Swan}] \label{GammaBequalsH}
The set $B$ contains representatives for all the orbits of points under the action of $\mathrm{SL_2}(\mathcal{O})$ on $\Hy$.
\end{lemma}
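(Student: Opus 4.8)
The plan is to reduce the statement to an extremal problem for the height function and then to solve that problem by a discreteness argument. Write a point of $\Hy$ as $P = (z,\zeta)$ and call $\zeta$ its \emph{height}. For $g = \mat \in \mathrm{SL_2}(\mathcal{O})$, the explicit action of lemma~\ref{operationFormula} shows that the height of $g\cdot P$ equals $\zeta/D_P(c,d)$, where $D_P(c,d) := |cz+d|^2 + |c|^2\zeta^2$. Since $\det g = 1$ forces the ideal $c\mathcal{O}+d\mathcal{O}$ to equal $\mathcal{O}$, each bottom row $(c,d)$ of an element of $\mathrm{SL_2}(\mathcal{O})$ is, up to the sign of $d$, a unimodular pair, and conversely every unimodular pair completes to such a matrix. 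Comparing $D_P$ with the defining inequality of $B$ and putting $c=\mu$, $d=-\lambda$, I observe that $P$ lies above or on the hemisphere $S_{\mu,\lambda}$ exactly when $D_P(\mu,-\lambda)\ge 1$, that is, exactly when $g\cdot P$ has height at most $\zeta$. Hence $P\in B$ if and only if no element of $\mathrm{SL_2}(\mathcal{O})$ strictly raises the height of $P$; equivalently, $P$ has maximal height in its orbit. (Upper triangular matrices have $c=0$ and preserve the height, so they cause no trouble.)

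It therefore suffices to show that the height function attains a maximum on each orbit $\mathrm{SL_2}(\mathcal{O})\cdot P$. Since the height of $g\cdot P$ is $\zeta/D_P(c,d)$ and depends only on the bottom row $(c,d)$, maximising the height amounts to minimising $D_P(c,d)$ over all unimodular pairs. I would establish that this minimum is attained by a finiteness argument: for any bound $R$, the inequality $D_P(c,d)\le R$ forces $|c|^2\le R/\zeta^2$ and then $|cz+d|^2 \le R$, so $c$ lies in a bounded region and, for each admissible $c$, the entry $d$ lies in a bounded disc; as $\mathcal{O}$ is a discrete subgroup of $\C$, only finitely many pairs $(c,d)$ satisfy $D_P(c,d)\le R$. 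Taking $R=1$, a value realised by the identity matrix $(c=0,\,d=1)$, leaves a finite, nonempty set of candidate pairs, on which $D_P$ attains its minimum at some $(c_*,d_*)$.

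Completing $(c_*,d_*)$ to a matrix $g_*\in\mathrm{SL_2}(\mathcal{O})$ and setting $P_*:=g_*\cdot P$, the point $P_*$ has maximal height in the orbit by construction, so by the equivalence of the first paragraph $P_*\in B$. As $P$ was arbitrary, every orbit meets $B$, which is the assertion. The hard part will be the discreteness step: one must be sure that small values of $D_P$ arise from only finitely many lattice pairs, which is exactly where the fact that $\mathcal{O}$ is a lattice in $\C$ enters, and where the argument would break down for a dense subring. The bookkeeping that translates the hemisphere inequalities into the height transformation is routine once the formula of lemma~\ref{operationFormula} is in hand.
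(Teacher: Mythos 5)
Your proof is correct and is essentially the argument the paper relies on by citing Swan: one characterizes $B$ as the set of points of maximal height within their $\mathrm{SL}_2(\mathcal{O})$-orbit using the height-transformation formula of lemma~\ref{operationFormula}, and then uses the discreteness of $\mathcal{O}$ as a lattice in $\C$ to show that the maximal height is attained on every orbit. The only slip is cosmetic: the paper's formula has denominator $|cz-d|^2+\zeta^2|c|^2$ rather than your $|cz+d|^2+|c|^2\zeta^2$, but since you substitute $d=-\lambda$ (and $(c,d)\mapsto(c,-d)$ is a bijection on bottom rows of $\mathrm{SL}_2(\mathcal{O})$-matrices and on unimodular pairs), the argument is unaffected.
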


The action  extends continuously to the boundary \mbox{$\partial \Hy$}, which is a Riemann sphere. \\
In $\Gamma := \mathrm{SL_2}(\mathcal{O}_{-m})$, consider the stabiliser subgroup $\Gamma_\infty$ of the point $\infty \in \partial \Hy$. 
In the cases $m =1$ and $m=3$, the latter group contains some rotation matrices like \scriptsize $\begin{pmatrix} 0 & \sqrt{-1} \\ \sqrt{-1} & 0 \end{pmatrix}$\normalsize, 
which we want to exclude. These two cases have been treated in \cite{Mendoza}, \cite{SchwermerVogtmann} and others,
 and we assume $m \neq 1$, $m \neq 3$ throughout the remainder of this article. Then,
$$ \Gamma_\infty = \left\{ \pm \begin{pmatrix}1 & \lambda \\ 0 & 1 \end{pmatrix} \thinspace | \medspace \lambda \in \mathcal{O} \right\},$$
which performs translations by the elements of $\ringO$ with respect to the Euclidean geometry of the upper-half space $\Hy$.\\

\begin{notation} \label{fundamentalRectangle}
A fundamental domain for $\Gamma_\infty$ in the complex plane (as a subset of $\partial \Hy$) is given by the rectangle
$$ D_0 := \begin{cases} \{ x +y\sqrt{-m} \in  \C  \medspace | \medspace 0 \leq x \leq 1, \medspace 0 \leq y \leq 1 \}, &
m \equiv 1 \medspace \mathrm{ or } \medspace 2 \mod 4, \\
\{ x +y\sqrt{-m} \in  \C \medspace | \medspace \frac{-1}{2} \leq x \leq \frac{1}{2}, \medspace 0 \leq y \leq \frac{1}{2} \}, &
m \equiv 3 \mod 4.
\end{cases}$$ 
And a fundamental domain for $\Gamma_\infty$ in $\Hy$ is given by
$$ D_\infty := \{ (z, \zeta) \in \Hy \medspace | \medspace z \in D_0 \}.$$
\end{notation}

\begin{definition}
We define the \emph{Bianchi fundamental polyhedron} as 
$$D := D_\infty \cap B.$$
\end{definition}
It is a polyhedron in hyperbolic space up to the missing vertex $\infty$, 
and up to missing vertices at the singular points if $\ringO$ is not a principal ideal domain (see subsection \ref{singular}).
As Lemma \ref{GammaBequalsH} states $\Gamma \cdot B = \Hy$, and as \mbox{$\Gamma_\infty \cdot D_\infty = \Hy$} yields $\Gamma_\infty \cdot D = B$, we have $\Gamma \cdot D  = \Hy$.
We observe the following notion of strictness of the fundamental domain: the interior of the Bianchi fundamental polyhedron contains no two points which are identified by~$\Gamma$.
\\
Swan proves the following theorem, which implies that the boundary of the Bianchi fundamental polyhedron consists of finitely many cells. 
\begin{theorem}[\cite{Swan}] \label{finiteNumber}
There is only a finite number of unimodular pairs $(\lambda, \mu)$ 
 such that the intersection of $S_{\mu,\lambda}$ with the Bianchi fundamental polyhedron is non-empty.
\end{theorem}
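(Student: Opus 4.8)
The plan is to reduce the statement to two finiteness bounds and then to isolate the genuine difficulty in a single geometric estimate. First observe that the hemisphere $S_{\mu,\lambda}$ has radius $1/|\mu|$, apex $(\lambda/\mu,\, 1/|\mu|)$, and base disk centred at $\lambda/\mu$; every point of $S_{\mu,\lambda}$ has height $\zeta \le 1/|\mu|$, and its horizontal projection lies in the closed disk of radius $1/|\mu|$ about $\lambda/\mu$. Hence, \emph{once} we know that $D$ avoids a neighbourhood of the complex plane, namely that there is a constant $\delta > 0$ with $D \subseteq \{(z,\zeta) \in \Hy : \zeta \ge \delta\}$, the rest is routine. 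If $S_{\mu,\lambda} \cap D \ne \emptyset$, then some point of the hemisphere has height $\ge \delta$, forcing $1/|\mu| \ge \delta$, i.e. $|\mu| \le 1/\delta$; and since $D \subseteq D_\infty$ sits above the bounded rectangle $D_0$ of Notation~\ref{fundamentalRectangle}, the centre $\lambda/\mu$ must lie within distance $1/|\mu| \le 1/\delta$ of $D_0$, hence in a bounded region of $\C$. As $|\mu|$ is bounded there are finitely many admissible $\mu \in \ringO$, and for each one the condition that $\lambda/\mu$ lie in a fixed bounded set confines $\lambda = \mu\cdot(\lambda/\mu)$ to a bounded region, leaving finitely many $\lambda \in \ringO$. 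This yields the theorem.

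The heart of the argument is therefore the height bound $\zeta \ge \delta$ on $D$. I would derive it from the following covering statement: for every $z \in \C$ there exists a unimodular pair $(\mu,\lambda) \in \ringO^2$ with $\mu \ne 0$ and $|\mu z - \lambda| < 1$; equivalently, the open base disks of the hemispheres $S_{\mu,\lambda}$ cover the complex plane. Granting this, the floor of $B$ over a point $z$ is the upper envelope $\eta(z) := \sup \frac{1}{|\mu|}\sqrt{1 - |\mu z - \lambda|^2}$ taken over those pairs with $|\mu z - \lambda| < 1$. Strict covering of the compact rectangle $\overline{D_0}$ makes $\eta$ strictly positive there, and a compactness argument (the relevant hemispheres being those of bounded radius with centre near $D_0$, hence finitely many) produces a uniform lower bound $\delta > 0$. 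Since any $(z,\zeta) \in D \subseteq B$ lies on or above every hemisphere, it lies above this envelope, so $\zeta \ge \delta$.

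The main obstacle is precisely this covering lemma, which is the one genuinely arithmetic input. It cannot be proved by rounding $z$ to the nearest lattice point: that uses only $\mu = 1$ and succeeds exactly when the covering radius of $\ringO$ as a lattice in $\C$ is less than $1$, which fails already for moderate $|\Delta|$ (for instance for $\ringO_{-5}$, whose covering radius is $\tfrac12\sqrt{6} > 1$). The correct approach is through Diophantine approximation in $\rationals(\sqrt{-m})$: the centres $\lambda/\mu$ run over all of $\rationals(\sqrt{-m})$ in lowest terms, and one must show that every $z$ admits such an approximation with $|\mu z - \lambda| < 1$, using denominators $\mu$ with $|\mu| > 1$ where necessary. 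I would establish this by a Minkowski/pigeonhole lattice-point argument, in the same spirit as the classical proof that $\Gamma \cdot B = \Hy$ underlying Lemma~\ref{GammaBequalsH}.

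One subtlety remains: when $\ringO$ is not a principal ideal domain the covering is not strict at the finitely many singular points on $\partial\Hy$, where the envelope $\eta$ descends to $0$ and $D$ acquires the missing cusp-vertices discussed in subsection~\ref{singular}. There the global height bound fails, and the number of hemispheres meeting $D$ near such a point has to be controlled separately; I would treat each singular point by a change of coordinates carrying it to a configuration analogous to the cusp at $\infty$, so that the local boundary of $D$ is governed by finitely many hemispheres and vertical walls rather than by an accumulating family, thereby restoring finiteness in that region as well.
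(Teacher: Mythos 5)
Your reduction --- a uniform height bound $D \subseteq \{(z,\zeta) : \zeta \ge \delta\}$ forces $|\mu| \le 1/\delta$ and confines the centres $\lambda/\mu$ to a bounded region, whence finiteness --- is sound, and in the class-number-one case your plan is essentially complete (note that the paper itself gives no proof: it quotes the theorem from Swan, and the two points below are exactly the substance of Swan's argument). The first genuine gap is in your covering lemma. A Minkowski/pigeonhole argument applied to the lattice $\{(\mu z - \lambda, \mu) : \lambda, \mu \in \ringO\} \subset \C^2$ produces a \emph{nonzero} pair with $|\mu z - \lambda| < 1$, but nothing makes that pair \emph{unimodular}, and the passage from one to the other is precisely where the class group intervenes: a fraction $\lambda/\mu$ can be rewritten with a unimodular pair if and only if the ideal class $[(\lambda,\mu)]$ is trivial (this is the correspondence recalled in subsection~\ref{singular}). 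When $\ringO$ is not a principal ideal domain, the pair produced by the lattice-point count may generate a non-principal ideal, and then \emph{no} unimodular pair represents the same fraction --- that is exactly what a singular cusp is. So the statement you correctly isolate as the one genuinely arithmetic input (every point of $\C$ outside the discrete set of singular cusps lies strictly below some hemisphere, i.e.\ Bianchi's characterisation) is not proved by the method you name, except when the class number is one; for irrational $z$ and non-trivial class group it requires a different argument, and dividing by the greatest common divisor is not available.

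The second gap is the treatment of the singular points, where your final paragraph asserts rather than proves the key local statement. Two separate problems arise. First, after conjugating a singular point $z_s = \lambda_s/\mu_s$ to $\infty$ by some $g \in \mathrm{SL}_2(K)$, the conjugated group is no longer $\mathrm{SL}_2(\ringO)$ and the transformed spheres form the family attached to the ideal $(\lambda_s,\mu_s)$ and its inverse; so ``a configuration analogous to the cusp at $\infty$'' must be constructed, not invoked, and claiming that the local boundary is then ``governed by finitely many hemispheres'' restates the conclusion you need. Second, and more importantly, even granting that finitely many hemispheres pass through $z_s$ and cover a punctured neighbourhood of it, your height-bound mechanism cannot finish the job there, because the envelope $\eta$ tends to $0$ at $z_s$ (indeed $\eta(z) \le \sqrt{2|z-z_s|}$, since $|\mu z_s - \lambda| \ge 1$ for every unimodular pair), so no cutoff of the form ``radius $\ge \delta$'' excludes the infinitely many small hemispheres accumulating at $z_s$. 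What is needed is quantitative: a matching lower bound of the shape $\eta(z) \ge c\sqrt{|z - z_s|}$, valid when the finitely many covering hemispheres pass through $z_s$ in all tangent directions, combined with the arithmetic discreteness $|\mu z_s - \lambda|^2 \in \frac{1}{|\mu_s|^2}\Z$, which together force any hemisphere meeting $\partial B$ near $z_s$ either to have radius bounded below or to pass exactly through $z_s$ with radius bounded below. Without control of this kind, nothing in your proposal rules out infinitely many hemispheres touching $D$ in any neighbourhood of a single singular point --- which is precisely the scenario the theorem must exclude.
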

Swan further proves a corollary, from which it can be deduced that the action of $\Gamma$ on $\Hy$ is properly discontinuous.

\section{The Fl\"oge cellular complex} \label{Floege cellular complex}

In order to obtain a cell complex with compact quotient space, we proceed in the following way due to Fl\"oge \cite{Floege}.
The boundary of $\Hy$ is the Riemann sphere~$\partial \Hy$, which, as a topological space,
 is made up of the complex plane $\C$ compactified with the cusp~$\infty$.
The totally geodesic surfaces in $\Hy$ are the Euclidean vertical planes (we define \emph{vertical} as orthogonal to the complex plane) and the Euclidean hemispheres centred on the complex plane.
The action of the Bianchi groups extends continuously to the \mbox{boundary \mbox{$\partial \Hy$}.} 
Consider the cellular structure on $\Hy$ induced by the $\Gamma$-images of the Bianchi fundamental polyhedron.
The cellular closure of this cell complex in $\Hy \cup \partial \Hy $ consists of \mbox{$ \Hy$} and 
\mbox{$\left(\rationals (\sqrt{-m}) \cup \{\infty\}\right) \subset \left(\C \cup \{\infty\}\right) \cong \partial \Hy$.} The $\mathrm{SL_2}(\ringO_{-m})$--orbit of a cusp $\frac{\lambda}{\mu}$ in $\left(\rationals (\sqrt{-m}) \cup \{\infty\}\right)$ corresponds to the ideal class $[(\lambda, \mu)]$ of $\ringO_{-m}$. It is well-known that this does not depend on the choice of the representative $\frac{\lambda}{\mu}$.
We extend our cell complex to a cell complex $\widetilde{X}$ by joining to it, in the case that $\ringO_{-m}$ is not a principal ideal domain, the $\mathrm{SL_2}(\ringO_{-m})$--orbits  of the cusps $\frac{\lambda}{\mu}$ for which the ideal $(\lambda, \mu)$ is not principal. 
At these cusps, we equip $\widetilde{X}$ with the ``horoball topology'' described in~\cite{Floege}. This simply means that the set of cusps, which is discrete in \mbox{$\partial \Hy$}, is located at the hyperbolic extremities of $\widetilde{X}$ : No neighbourhood of a cusp, except the whole $\widetilde{X}$, contains any other cusp.

We retract $\widetilde{X}$ in the following, $\mathrm{SL_2}(\ringO_{-m})$--equivariant, way.
On the Bianchi fundamental polyhedron, the retraction is given by the vertical projection (away from the cusp $\infty$) onto its facets which are closed in $\Hy \cup \partial \Hy$. The latter are the facets which do not touch the cusp $\infty$, and are the bottom facets with respect to our vertical direction. The retraction is continued on $\Hy$ by the group action. It is proven in \cite{FloegePhD} that this retraction is continuous.
We call the retract of $\widetilde{X}$ the \emph{Fl\"oge cellular complex} and denote it by $X$.
So in the principal ideal domain cases, $X$ is a retract of the original cellular structure on~$\Hy$, obtained by contracting the Bianchi fundamental polyhedron onto its cells which do not touch the boundary of $\Hy$.  
In \cite{RahmFuchs}, it is checked that the Fl\"oge cellular complex is contractible.


\section{Connecting Fl\"oge cell complex and Borel--Serre compactification} \label{connection}

Let $\Gamma$ be a Bianchi group with $\ringO$ admitting as only units $\{\pm 1\}$,
 i.e. we suppose $\ringO$ not to be the Gaussian or Eisenstein integers.
 In the latter two cases, the problem of the singular cusps treated here does not occur anyway.
Let $\T_i$ be the torus attached at the cusp $i$ of~$\Gamma$,
and let $x_i$ and $y_i$ denote the cycles generating $\Homol_1(\T_i)$.
Let $P$ be the Bianchi fundamental polyhedron of~$\Gamma$.
Write ``hyp. cells'' for cells in the interior of hyperbolic space.
Consider the short exact sequence of chain complexes obtained from collapsing the singular tori,
\begin{flushleft}
 \begin{tikzpicture}[descr/.style={fill=white,inner sep=1.5pt}]
        \matrix (m) [
            matrix of math nodes,
            row sep=2em,
            column sep=1.4em,
            text height=2.5ex, text depth=0.95ex
        ]
        {0 & 0 & \langle P \rangle  & \langle P \rangle & 0 \\
           0 & \bigoplus\limits_s^{\rm singular} \langle \T_s \rangle \quad \medspace & \bigoplus\limits_c^{\rm any \medspace cusp} \langle \T_c \rangle \oplus \langle {\rm hyp. \medspace }2{\rm -cells} \rangle & \langle \T_\infty \rangle \oplus \langle {\rm hyp. \medspace }2{\rm -cells} \rangle & 0 \\ 
	    0 & \bigoplus\limits_s^{\rm singular} \langle x_s, y_s \rangle & \bigoplus\limits_c^{\rm any \medspace cusp} \langle x_c, y_c \rangle \oplus \langle {\rm hyp. \medspace }{\rm edges} \rangle & \langle x_\infty, y_\infty \rangle \oplus \langle {\rm hyp. \medspace }{\rm edges} \rangle & 0 \\   
	     0 & 0 & \bigoplus\limits_c^{\rm any \medspace cusp} \langle c \rangle \oplus \langle {\rm hyp. \medspace }{\rm vertices} \rangle & \bigoplus\limits_c^{\rm any \medspace cusp} \langle c \rangle \oplus \langle {\rm hyp. \medspace }{\rm vertices} \rangle & 0. \\  
        };

        \path[overlay,->, font=\scriptsize,>=latex]
        (m-1-1) edge (m-1-2) 
        (m-1-2) edge (m-2-2)
        (m-1-2) edge (m-1-3) 
        (m-1-3) edge node[descr,xshift=1.4ex] {$\partial_3$}(m-2-3)
        (m-1-3) edge (m-1-4) 
        (m-1-4) edge node[descr,xshift=1.4ex] {$\widetilde{\partial_3}$} (m-2-4)
        (m-1-4) edge (m-1-5)
        (m-2-1) edge (m-2-2)
	(m-2-2) edge node[descr,yshift=1.2ex] {$\beta$} (m-2-3) 
        (m-2-3) edge (m-2-4)
        (m-2-4) edge (m-2-5)
	(m-2-2) edge node[descr,xshift=1.2ex] {$0$} (m-3-2)
        (m-2-3) edge node[descr,xshift=3.0ex] {$0 \oplus \partial_2$} (m-3-3)
        (m-2-4) edge node[descr,xshift=3.0ex] {$0 \oplus \widetilde{\partial_2}$} (m-3-4) 
        (m-3-1) edge (m-3-2)
	(m-3-2) edge node[descr,yshift=1.4ex] {$\beta$} (m-3-3) 
        (m-3-3) edge (m-3-4)
        (m-3-4) edge (m-3-5)
	(m-3-2) edge (m-4-2)
        (m-3-3) edge node[descr,xshift=3.0ex] {$0 \oplus \partial_1$} (m-4-3)
        (m-3-4) edge node[descr,xshift=3.0ex] {$0 \oplus \partial_1$} (m-4-4) 
        (m-4-1) edge (m-4-2)
	(m-4-2) edge (m-4-3) 
        (m-4-3) edge (m-4-4)
        (m-4-4) edge (m-4-5)
  ;
\end{tikzpicture}
\end{flushleft}
Poincar\'e's theorem on fundamental polyhedra tells us that $\partial_3(P) = \left\langle \bigcup\limits_c^{\rm any \medspace cusp} \T_c \right\rangle$, 
and hence $\widetilde{\partial_3}(P) = \langle \T_\infty \rangle$.
From \cite{On a question of Serre}, we see that for every cusp $c$, there is a chain of hyperbolic $2$-cells
that we denote by $ch(x_c)$ and which is mapped to the cycle $x_c$ by $\partial_2$.
And furthermore, $y_c$ is in the cokernel of $\partial_2$ 
(of course, this holds up to the appropriate permutation of the labels $x_c$ and $y_c$).
This implies that  $\widetilde{\partial_2}(ch(x_\infty)) = x_\infty$ and $y_\infty$ is in the cokernel of $\widetilde{\partial_2}$.
As the quotient space is path-wise connected, the cokernel of $\partial_1$ is isomorphic to $\Z$.
The above information tells us that the long exact sequence induced on integral homology by the map $\beta$ concentrates in

\medskip

\begin{tikzpicture}[descr/.style={fill=white,inner sep=1.5pt}]
        \matrix (m) [
            matrix of math nodes,
            row sep=1em,
            column sep=2.5em,
            text height=1.7ex, text depth=0.25ex
        ]
        {  0 & \bigoplus\limits_s^{\rm singular} \langle \T_s \rangle &  \bigl(\bigoplus\limits_c^{\rm any \medspace cusp} \langle \T_c \rangle\bigr) /_{\langle \cup_c \T_c \rangle} \oplus \Homol_2^{\rm \overline{cusp}} & \Homol_2^{\rm \overline{cusp}}  \bigoplus_s \langle ch(x_s) \rangle \\
            & \bigoplus_s \langle x_s, y_s \rangle & \bigoplus_c \langle y_c \rangle   \oplus \Homol_1^{\rm \overline{cusp}} & \Homol_1^{\rm \overline{cusp}} \oplus \langle y_\infty \rangle \\
            & 0  & \Z & \Z & 0, \\
        };

        \path[overlay,->, font=\scriptsize,>=latex]
        (m-1-1) edge (m-1-2) 
        (m-1-2) edge node[descr,yshift=1.2ex] {$\beta_2$} (m-1-3) 
        (m-1-3) edge (m-1-4) 
        (m-1-4) edge[out=355,in=175]  (m-2-2)
        (m-2-2) edge node[descr,yshift=1.15ex] {$\beta_1$} (m-2-3)
        (m-2-3) edge (m-2-4)
        (m-2-4) edge[out=355,in=175]  (m-3-2)
        (m-3-2) edge (m-3-3)
        (m-3-3) edge (m-3-4)
        (m-3-4) edge (m-3-5);
\end{tikzpicture}

where the maps without labels are the obvious restriction maps making the sequence exact;
 and where $\Homol_1^{\rm \overline{cusp}}$ and $\Homol_2^{\rm \overline{cusp}}$ are generated by cycles from the interior of~$_\Gamma\backslash \Hy$.

Note that $\Homol_2^{\rm \overline{cusp}}  \bigoplus_s \langle ch(x_s) \rangle$ is non-naturally isomorphic to  $\left(\bigoplus\limits_c^{\rm any \medspace cusp} \langle \T_c \rangle\right) /_{\langle \cup_c \T_c \rangle} \oplus \Homol_2^{\rm \overline{cusp}}$,
namely collapsing a torus $\T_s$ moves its $2$-cycle into a bubble $ch(x_s)$ emerging adjacent to the singular cusp $s$ in the Fl\"oge complex.

\section{ The equivariant spectral sequence to group homology} \label{SpecSeq}
Let $\Gamma := \mathrm{PSL_2}(\ringO_{-m})$, and let $X$ be the Fl\"oge cellular complex of section~\ref{Floege cellular complex},
 the cell structure of which we subdivide until the cells are fixed pointwise by their stabilisers.
 We describe now how to assemble the homology of the Borel--Serre compactified quotient space (issue of the previous section)
and the Farrell cohomology of $\Gamma$, for which general formulae have been given in~\cite{accessing Farrell cohomology}
(based on~\cite{homological torsion}),
to the full group homology of $\Gamma$ with trivial $\Z$--coefficients.
 We proceed following \cite[VII]{Brown} and \cite{SchwermerVogtmann}.
Let us consider the homology $\Homol_*(\Gamma; C_\bullet(X))$ of $\Gamma$ with coefficients in the cellular chain complex $C_\bullet(X)$ associated to $X$; and call it the $\Gamma$-equivariant homology of $X$.
As $X$ is contractible, the map $X \to pt.$ to the point $pt.$ induces an isomorphism
$$\Homol_*(\Gamma; \thinspace C_\bullet(X)) \to \Homol_*(\Gamma; \thinspace C_\bullet(pt.)) \cong \Homol_*(\Gamma;\thinspace \Z).$$
Denote by $X^p$ the set of $p$-cells of $X$,
 and make use of that the stabiliser $\Gamma_\sigma$ in $\Gamma$ of any $p$-cell $\sigma$ of $X$ fixes $\sigma$ pointwise. Then from 
$$ C_p(X) = \bigoplus\limits_{\sigma \in X^p} \Z 
\cong \bigoplus\limits_{\sigma \thinspace\in \thinspace _\Gamma \backslash X^p} {\rm Ind}^\Gamma_{\Gamma_\sigma} \Z,$$
Shapiro's lemma yields
$$\Homol_{q}(\Gamma; \thinspace C_p(X)) 
\cong \bigoplus_{\sigma\thinspace\in\thinspace _\Gamma\backslash X^p}\Homol_q(\Gamma_\sigma; \thinspace \Z);$$
and the equivariant Leray/Serre spectral sequence takes the form 
$$
E^1_{p,q}=\bigoplus_{\sigma\thinspace\in\thinspace _\Gamma\backslash X^p}\Homol_q(\Gamma_\sigma; \thinspace \Z)\implies \Homol_{p+q}(\Gamma; \thinspace C_\bullet(X)),
$$
converging to the $\Gamma$-equivariant homology of $X$, which is, as we have already seen, isomorphic to 
$\Homol_{p+q}(\Gamma; \thinspace \Z)$ with the trivial action on the coefficients $\Z$.

As in degrees above the virtual cohomological dimension, which is $2$ for the Bianchi groups,
 the group homology is isomorphic to the Farrell cohomology, we obtain the isomorphism type from the above mentioned general formulae.

In the lower degrees $q \in \{0,1,2\}$, the following terms remain on the $E^2$-page,  which is concentrated in the columns $p=0,1,2$:
\scriptsize
$$ \xymatrix{
q=2 & \bigoplus\limits_{s\medspace{\rm singular}} \Z \oplus  2\mbox{-}{\rm torsion} \oplus 3\mbox{-}{\rm torsion} &  2\mbox{-}{\rm torsion} \oplus 3\mbox{-}{\rm torsion} & 0\\
q=1 & \bigoplus\limits_{s\medspace{\rm singular}} \Z^2 \oplus {\rm Farrell \medspace supplement} & 2\mbox{-}{\rm torsion} \oplus 3\mbox{-}{\rm torsion} & 0 \\
q=0 &{\mathbb{Z}} & \Homol_1(_\Gamma\backslash{X}; \thinspace \Z) & \ar[ull]_{d^2_{2,0}} \Homol_2(_\Gamma\backslash{X}; \thinspace \Z) 
}$$
\normalsize
where the ``Farrell supplement'' is the cokernel of the map
$$
\bigoplus_{\sigma \thinspace\in \thinspace _\Gamma\backslash X^0} \Homol_1 (\Gamma_\sigma; \Z)
\xleftarrow{\ d^1_{1,1}\ }
\bigoplus_{\sigma \thinspace\in \thinspace _\Gamma\backslash X^1} \Homol_1(\Gamma_\sigma; \Z).
$$
induced by inclusion of finite cell stabilisers.
As the cells are fixed pointwise by their stabilisers, we see that for $q>0$, the $E^1_{p,q}$-terms are concentrated in the two columns $p = 0$ and $p = 1$. 
We compute the bottom row ($q=0$)  of the above spectral sequence as the homology of the quotient space $_\Gamma\backslash{X}$.
Then we infer from section~\ref{connection} that the rational rank of the differential $d^2_{2,0}$
is the number of non-trivial ideal classes of $\ringO_{-m}$. 



Using Serre's decomposition of the homology group $\Homol_1(_\Gamma\backslash \Hy; \thinspace \Z)$
 into the direct sum of the free Abelian group with one generator for each element of the class group of $\ringO_{-m}$
and the group $\Homol_1^{\rm \overline{cusp}}(_\Gamma\backslash \Hy; \thinspace \Z)$,
and using the long exact sequence of section~\ref{connection},
 we see that
 $\Homol_1(_\Gamma\backslash X; \thinspace \Z) 
\cong \Homol_1^{\rm \overline{cusp}}(_\Gamma\backslash \Hy; \thinspace \Z) \oplus \Z$.
This has made it possible ot compute $\Homol_1^{\rm \overline{cusp}}(_\Gamma\backslash \Hy; \thinspace \Z)$
from the quotient space of the Fl\"oge cellular complex in figures~\ref{small absolute values} and~\ref{greater absolute values}.
Furthermore, we have an inclusion of $\Homol_1(_\Gamma\backslash \Hy; \thinspace \Z)$ into the group homology
 $\Homol_1(\Gamma; \thinspace \Z)$;
and the latter group homology is a quotient of the direct sum of $\Homol_1(_\Gamma\backslash \Hy; \thinspace \Z)$
and the Farrell supplement.
\section{Swan's concept to determine the Bianchi fundamental polyhedron} \label{Swan's concept}

This section recalls Richard G. Swan's work \cite{Swan}, 
which gives a concept --- from the  theoretical viewpoint --- for an algorithm to compute the Bianchi fundamental polyhedron. 
The set $B$ which determines the Bianchi fundamental polyhedron has been defined using infinitely many hemispheres. But we will see that only a finite number of them are significant for this purpose and need to be computed. We will state a criterion for what is an appropriate choice that gives us precisely the set $B$. This criterion is easy to verify in practice.
Suppose we have made a finite selection of $n$ hemispheres. The index $i$ running from $1$ through $n$, we denote the $i$-th hemisphere by $S(\alpha_i)$, where $\alpha_i$ is its centre and given by a fraction $\alpha_i = \frac{\lambda_i}{\mu_i}$ in the number field $\rationals(\sqrt{-m}\thinspace)$.
 Here, we require the ideal $(\lambda_i,\mu_i)$ to be the whole ring of integers $\mathcal{O}$. This requirement is just the one already made for all the hemispheres in the definition of $B$.
Now, we can do an approximation of notation \ref{B}, using, modulo the translation group $\Gamma_\infty$, a finite number of hemispheres.  

\begin{notation} \label{spaceAboveHemispheres}
Let $
B(\alpha_1,\ldots,\alpha_n):=\bigl\{(z,\zeta) \in \Hy$:
The inequality $|\mu z -\lambda|^2 +|\mu|^2\zeta^2 \geq 1$  is fulfilled for all unimodular pairs \mbox{$(\mu$, $\lambda) \in \mathcal{O}^2$} with $\frac{\lambda}{\mu} = \alpha_i +\gamma$, for some $i \in \{1,\ldots,n\}$ and some $\gamma \in \mathcal{O}$ $\bigr\}$.
Then $B(\alpha_1,\ldots,\alpha_n)$ is the set of all points in $\Hy$ lying above or on all hemispheres $S(\alpha_i +\gamma)$, $i = 1,\ldots,n$; for any $\gamma \in \mathcal{O}$.
\end{notation}
The intersection $B(\alpha_1,\ldots,\alpha_n) \cap D_\infty$  with the fundamental domain $D_\infty$ for the translation group $\Gamma_\infty$, is our candidate to equal the Bianchi fundamental polyhedron.

\subsection{Convergence of the approximation.} \hfill
We will give a method to decide when \\ \mbox{$B(\alpha_1,\ldots,\alpha_n) = B$.}
This gives us an effective way to find $B$ by adding more and more elements to the set $\{ \alpha_1,\ldots,\alpha_n \}$ until we find $B(\alpha_1,\ldots,\alpha_n) = B$. 
\label{delB}
We consider the boundary $\partial B(\alpha_1,\ldots,\alpha_n)$ of $B(\alpha_1,\ldots,\alpha_n)$ in $\Hy \cup \C$. It consists of the points $(z,\zeta) \in \Hy \cup \C$ satisfying all the non-strict inequalities
$|\mu z -\lambda|^2 +|\mu|^2\zeta^2 \geq 1$ that we have used to define $B(\alpha_1,\ldots,\alpha_n)$, and satisfy the additional condition that at least one of these non-strict inequalities is an equality.
We will see below that $\partial B(\alpha_1,\ldots,\alpha_n)$ carries a natural cell structure.
This, together with the following definitions, makes it possible to state the criterion which tells us when we have found all the hemispheres relevant for the Bianchi fundamental polyhedron.

\begin{definition} \label{strictlybelow}
We shall say that the hemisphere $S_{\mu,\lambda}$  is \emph{strictly below} the hemisphere $S_{\beta,\alpha}$  at a point $z \in \C$ if the following inequality is satisfied:
$$ \left| z - \frac{\alpha}{\beta} \right|^2 - \frac{1}{|\beta|^2} 
<  \left| z - \frac{\lambda}{\mu}  \right|^2 - \frac{1}{|\mu|^2}.$$
\end{definition}
 This is, of course, an abuse of language because there may not be any points on $S_{\beta,\alpha}$ or $S_{\mu,\lambda}$
 with coordinate $z$. However, if there is a point $(z,\zeta)$ on $S_{\mu,\lambda}$,
 the right hand side of the inequality is just $-\zeta^2$.
Thus the left hand side is negative and so of the form $-(\zeta')^2$. 
Clearly, $(z,\zeta') \in S_{\beta,\alpha}$ and $\zeta' > \zeta$. 
We will further say that a point $(z,\zeta) \in \Hy \cup \C$ is \emph{strictly below} a hemisphere $S_{\mu,\lambda}$,
 if there is a point $(z,\zeta') \in S_{\mu,\lambda}$ with $\zeta' > \zeta$.

\subsection{Singular points} \label{singular}
We call \emph{cusps} the elements of the number field $K = \rationals(\sqrt{-m}\thinspace)$
 considered as points in the boundary of hyperbolic space,
 via an embedding \mbox{$K  \subset \C \cup \{ \infty \} \cong \partial \Hy$.}
We write $\infty = \frac{1}{0}$, which we also consider as a cusp.
It is well-known that the set of cusps is closed under the action of SL$_2(\ringO)$ on $\partial \Hy$;
and that we have the following bijective correspondence between 
the SL$_2(\ringO)$-orbits of cusps and the ideal classes in $\ringO$. 
A cusp $\frac{\lambda}{\mu}$ is in the SL$_2(\ringO)$-orbit of the cusp $\frac{\lambda'}{\mu'}$,
 if and only if the ideals $(\lambda',\mu')$ and $(\lambda,\mu)$ are in the same ideal class.
It immediately follows that the orbit of the cusp $\infty = \frac{1}{0}$ corresponds to the principal ideals.
Let us call \emph{singular} the cusps $\frac{\lambda}{\mu}$ such that $(\lambda,\mu)$ is not principal.
And let us call \emph{singular points} the singular cusps which lie in $\partial B$.
It follows from the characterisation of the singular points by Bianchi that they are precisely the points in
 $\C \subset \partial \Hy$ which cannot be strictly below any hemisphere.
In the cases where $\ringO$ is a principal ideal domain, $K \cup \{ \infty \}$ consists of only one SL$_2(\ringO)$-orbit,
 so there are no singular points. 
We use the following formulae derived by Swan, to compute representatives modulo the translations by $\Gamma_\infty$,
 of the singular points.  
\begin{lemma}[\cite{Swan}] 
The singular points of $K, \mod \ringO$, are given by \\
$\frac{p(r+\sqrt{-m})}{s}$, where 
\mbox{$p,r,s \in \Z$,} $s>0$, \qquad
$\frac{-s}{2} < r \leq \frac{s}{2}$, \qquad $s^2 \leq r^2 +m$, and
\begin{itemize}
\item if $m \equiv 1$ or $2 \mod 4$, 
\\ $s \neq 1$, $s \medspace | \medspace r^2+m$, the numbers $p$ and $s$ are coprime,
and $p$ is taken $\mod s$;
\item if $m \equiv 3 \mod 4$, 
\\ $s$ is even, $s \neq 2$, $2s \medspace | \medspace r^2 +m$, the numbers $p$ and $\frac{s}{2}$ are coprime;
$p$ is taken$ \mod \frac{s}{2}$.
\end{itemize} 
\end{lemma}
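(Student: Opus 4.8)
The plan is to use the characterisation obtained just above: a point $z\in\C$ is a singular point exactly when it cannot be strictly below any hemisphere $S_{\mu,\lambda}$ (Definition~\ref{strictlybelow}), i.e. when $|\mu z-\lambda|^2\ge 1$ for every unimodular pair $(\mu,\lambda)\in\ringO^2$ with $\mu\ne 0$. Since the singular cusps are those whose ideal $(\lambda,\mu)$ is non-principal, I would first reduce the ideal class to Hermite normal form: every ideal class of $\ringO$ contains an integral ideal $\mathfrak b=(s,\,r+\sqrt{-m})$ with $s=N(\mathfrak b)>0$, whose second generator is determined only modulo $s$, and I normalise $r$ into the range $-s/2<r\le s/2$. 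The cusp $\frac{r+\sqrt{-m}}{s}$ then has ideal $\mathfrak b$; scaling its numerator by an integer $p$ with $\gcd(p,s)=1$ keeps the ideal equal to $\mathfrak b$ (hence fixes the ideal class), producing the candidate representatives $z=\frac{p(r+\sqrt{-m})}{s}$. A translation $z\mapsto z+t$, $t\in\ringO$, leaves $s$ and $r$ untouched and changes $p$ by a multiple of $s$, so modulo $\Gamma_\infty$ the parameter $p$ matters only modulo $s$; this accounts for the clause ``$p$ is taken $\bmod s$''.

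The arithmetic heart is to compute, for such a $z$, the values of $|\mu z-\lambda|^2$ that a hemisphere can attain. Clearing denominators gives $|\mu z-\lambda|^2=N(\eta)/s^2$ with $\eta:=p\mu(r+\sqrt{-m})-s\lambda\in\ringO$, where $N$ is the field norm. The coprimality $\gcd(p,s)=1$ forces $\bigl(p(r+\sqrt{-m}),\,s\bigr)=(s,\,r+\sqrt{-m})=\mathfrak b$, so $\eta$ ranges in $\mathfrak b$, whose norm is $s$ precisely when $s\mid r^2+m$ (this is the stated divisibility). Hence $N(\eta)$ is a nonnegative multiple of $s$; the value $\eta=0$ would make $\mathfrak b$ principal, contradicting singularity, so the cusp is singular iff the minimal nonzero $N(\eta)$ realised by a genuine hemisphere (i.e. with $\mu\ne0$) is at least $s^2$. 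Expanding over the $\Z$-basis $\{s,\,r+\sqrt{-m}\}$ of $\mathfrak b$ gives $N\bigl(as+b(r+\sqrt{-m})\bigr)=(as+br)^2+b^2m$; the normalisation $-s/2<r\le s/2$ makes $a=0$ optimal, so the minimum over $|b|=1$ equals $r^2+m$, while $|b|\ge2$ only increases $N$, and $b=0$ corresponds to the unimodular pair $(0,1)$, which is the cusp $\infty$ and not a hemisphere. The minimality condition therefore reduces exactly to $s^2\le r^2+m$, the last stated inequality. This is the step I expect to be the main obstacle: I must confirm that the minimiser $\eta=r+\sqrt{-m}$ is actually realised by a unimodular pair with $\mu\ne0$ (a congruence solvable because $\gcd(p,s)=1$), that the $b=0$ and principal cases drop out, and that the terms with $|b|\ge2$ never dip below $s^2$.

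Finally I would run the two residue classes of $m$ modulo $4$ separately, since they change the integral basis from $\{1,\sqrt{-m}\}$ to $\{1,\frac{1+\sqrt{-m}}{2}\}$, halve the height of the fundamental rectangle of Notation~\ref{fundamentalRectangle}, and hence insert a factor of $2$ into both the divisibility condition ($2s\mid r^2+m$, with $s$ even) and the modulus for $p$ (coprime to $s/2$, taken $\bmod\, s/2$); this is also where the exclusions $s\ne1$ versus $s$ even and $s\ne2$ originate, as they remove exactly the principal class, that is, the orbit of $\infty$. To conclude I would check that the construction is a bijection: every triple $(p,r,s)$ satisfying the listed conditions yields a singular point by the computation above, and two admissible triples give the same point modulo $\ringO$ only if they coincide, since the coefficient of $\sqrt{-m}$ in $z$ equals $p/s$ and pins down $p\bmod s$, while the real part pins down $r$ within its normalised range.
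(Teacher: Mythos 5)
First, a remark on the comparison itself: the paper offers no proof of this lemma --- it is quoted from Swan's article --- so your proposal has to be judged against what a complete proof requires. Your ``if'' direction (every admissible triple $(p,r,s)$ yields a singular point) is essentially sound: when $s\mid r^2+m$, the module $\Z s+\Z(r+\sqrt{-m})$ is an ideal $\mathfrak{b}$ of norm $s$; every $\mu z-\lambda$ with $(\mu,\lambda)$ unimodular equals $\eta/s$ with $\eta\in\mathfrak{b}$; and under $-s/2<r\le s/2$ and $s^2\le r^2+m$ the minimal nonzero norm in $\mathfrak{b}$ is exactly $s^2$ (your ``$|b|\ge 2$ only increases $N$'' does need the observation that $s^2\le r^2+m\le s^2/4+m$ forces $4m>s^2$), while $\eta=0$ is impossible because $\mathfrak{b}$ would then be principal with a generator of norm $s<s^2$.

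The genuine gap is surjectivity. What you call ``checking that the construction is a bijection'' consists of well-definedness plus injectivity; nowhere do you show that an arbitrary singular point has the form $p(r+\sqrt{-m})/s$ with the stated side conditions. Reducing the \emph{ideal class} of a singular cusp to Hermite normal form produces some ideal $(s,r+\sqrt{-m})$ in that class, but it does not express the given cusp in terms of that ideal: the cusps of a fixed non-principal class, taken modulo $\ringO$, have denominator ideals $(\ringO+z\ringO)^{-1}$ ranging over \emph{all} integral ideals of a class, including non-primitive ones and primitive ones with $s^2>r^2+m$. Concretely, for $m=5$ the cusps $\frac{1+\sqrt{-5}}{3}$ and $\frac{1+\sqrt{-5}}{4}$ lie in the same (non-principal) class as $\frac{1+\sqrt{-5}}{2}$, yet neither is in the list; a complete proof must show that each such cusp lies strictly below some hemisphere (here both lie strictly below $S_{1,0}$, with $|z|^2=6/9$, respectively $6/16$), and the second one is not even of the shape your computation assumes. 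The clean route is to prove: if $z$ cannot be strictly below any hemisphere, then its denominator ideal is primitive and of minimal norm in its ideal class --- and this is exactly where the realization claim you flag becomes indispensable, namely that a nonzero $\xi$ in the inverse of the denominator ideal with $N(\xi)<1$ can be written as $\mu z-\lambda$ with $(\mu,\lambda)$ unimodular and $\mu\ne 0$. That claim, in turn, is not merely ``a congruence solvable because $\gcd(p,s)=1$'': solving $p\mu\equiv b \pmod{s}$ produces a pair, but its unimodularity requires adjusting $\mu$ within its congruence class to avoid the prime divisors of $(r+\sqrt{-m})$ (a Chinese-remainder argument). Without these two ingredients, both the necessity of $s^2\le r^2+m$ and the completeness of the list remain unproven.
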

The singular points need not be considered in Swan's termination criterion, because they cannot be strictly below any hemisphere $S_{\mu,\lambda}$.

\subsection{Swan's termination criterion}
We observe that the set of $z \in \C$ over which some hemisphere is strictly below another is $\C$ or an open half-plane.
In the latter case, the boundary of this is a line.

\begin{notation} \label{agreeingLine}
Denote by $L(\frac{\alpha}{\beta}, \frac{\lambda}{\mu})$ the set of $z \in \C$ over which neither $S_{\beta,\alpha}$ is strictly below $S_{\mu,\lambda}$ nor vice versa.
\end{notation}
This line is computed by turning the inequality in definition \ref{strictlybelow} into an equation. Swan calls it the line over which the two hemispheres \emph{agree}, and we will see later that the most important edges of the Bianchi fundamental polyhedron lie on the preimages of such lines. 
We now restrict our attention to a set of hemispheres which is finite modulo the translations in $\Gamma_\infty$.
\label{collection}
Consider a set of hemispheres $S(\alpha_i +\gamma)$, where the index $i$ runs from 1 through $n$, and $\gamma$ runs through $\ringO$. We call this set of hemispheres a \emph{collection}, if every non-singular 
point $z \in \C \subset \partial \Hy$ is strictly below some hemisphere in our set.
Now consider a set $B(\alpha_1,\ldots,\alpha_n)$ which is determined by such a collection of hemispheres.

\begin{theorem}[Swan's termination criterion \cite{Swan}] \label{cancelCriterion} We have 
 \mbox{$B(\alpha_1,\ldots,\alpha_n) = B$} if and only if no vertex of $\partial B(\alpha_1,\ldots,\alpha_n)$ can be strictly below any hemisphere $S_{\mu,\lambda}$.
\end{theorem}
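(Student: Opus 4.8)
The plan is to prove both directions of the biconditional, with the forward direction being essentially immediate from the definitions and the reverse direction carrying the real content. For the forward implication, suppose $B(\alpha_1,\ldots,\alpha_n) = B$. A vertex $v$ of $\partial B(\alpha_1,\ldots,\alpha_n)$ lies on $\partial B$, so it is a point of $B$ at which several of the defining inequalities $|\mu z - \lambda|^2 + |\mu|^2 \zeta^2 \geq 1$ become equalities. If some hemisphere $S_{\mu,\lambda}$ were strictly below $v$ in the sense of Definition \ref{strictlybelow}, then by the observation following that definition there would be a point of $S_{\mu,\lambda}$ lying vertically \emph{above} $v$, so $v$ would violate the inequality for $(\mu,\lambda)$ and fail to lie in $B$, a contradiction. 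Hence no vertex can be strictly below any hemisphere.

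The reverse implication is the substantial part, and I would argue by contraposition: assuming $B(\alpha_1,\ldots,\alpha_n) \neq B$, I will produce a vertex of $\partial B(\alpha_1,\ldots,\alpha_n)$ that is strictly below some hemisphere $S_{\mu,\lambda}$. Since we always have $B \subseteq B(\alpha_1,\ldots,\alpha_n)$ (the latter imposes only a subset of the defining inequalities), the inequality must be strict somewhere: there is a point $(z,\zeta) \in B(\alpha_1,\ldots,\alpha_n) \setminus B$, which means $(z,\zeta)$ is strictly below some hemisphere $S_{\mu,\lambda}$ not among those used to define $B(\alpha_1,\ldots,\alpha_n)$. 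First I would reduce to the boundary: by pushing $(z,\zeta)$ vertically downward until it meets $\partial B(\alpha_1,\ldots,\alpha_n)$, I may assume $(z,\zeta)$ itself lies on the boundary, and being strictly below $S_{\mu,\lambda}$ is preserved under lowering $\zeta$. The key geometric input is the \emph{collection} hypothesis together with Theorem \ref{finiteNumber}: because $\partial B(\alpha_1,\ldots,\alpha_n)$ is a finite polyhedral complex (modulo $\Gamma_\infty$) whose cells lie on finitely many hemispheres and on the agreeing lines $L$ of Notation \ref{agreeingLine}, I can analyze the locus of boundary points that are strictly below $S_{\mu,\lambda}$.

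The heart of the argument is to show that this ``strictly below'' locus, restricted to $\partial B(\alpha_1,\ldots,\alpha_n)$, must contain a vertex. I would argue that the set of $z \in \C$ over which $S_{\mu,\lambda}$ lies strictly below the current top hemisphere is, by the remark preceding Notation \ref{agreeingLine}, either all of $\C$ or an open half-plane bounded by an agreeing line. Intersecting this region with the cell structure on $\partial B(\alpha_1,\ldots,\alpha_n)$, the strictly-below condition is an open condition defined by finitely many strict inequalities, so the set of cells meeting it is nonempty; tracking the top-most hemisphere as one moves along an edge, the condition either already holds at an endpoint or forces a crossing of an agreeing line, and such crossings are exactly the vertices of $\partial B(\alpha_1,\ldots,\alpha_n)$. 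Thus some vertex is strictly below $S_{\mu,\lambda}$, completing the contrapositive. The main obstacle I anticipate is precisely this last combinatorial-geometric step: one must rule out the degenerate possibility that the strictly-below region meets only the relative interiors of higher-dimensional cells without ever reaching a vertex, and this is where the singular points (Subsection \ref{singular}) must be handled carefully, since by the characterisation recalled above they can never be strictly below any hemisphere and therefore cannot interfere with the descent to a genuine vertex.
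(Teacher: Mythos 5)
Your forward direction is fine, and your contrapositive set-up for the hard direction (pick a point of $B(\alpha_1,\ldots,\alpha_n)\setminus B$, push it vertically down onto $\partial B(\alpha_1,\ldots,\alpha_n)$, note that being strictly below $S_{\mu,\lambda}$ survives the descent) is sound. The genuine gap is exactly the step you flag yourself: you never rule out that the strictly-below locus meets only the relative interiors of cells. Worse, your proposed mechanism for reaching a vertex --- ``tracking the top-most hemisphere along an edge \dots forces a crossing of an agreeing line, and such crossings are exactly the vertices'' --- is incorrect: the boundary of the strictly-below region is the agreeing line $L(\frac{\alpha_i}{\beta_i},\frac{\lambda}{\mu})$ between the current top hemisphere and the \emph{new} hemisphere $S_{\mu,\lambda}$, which is not in the collection. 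Vertices of $\partial B(\alpha_1,\ldots,\alpha_n)$ arise only from intersections of agreeing lines of hemispheres \emph{within} the collection, so the points where your tracked edge crosses $L(\frac{\alpha_i}{\beta_i},\frac{\lambda}{\mu})$ are in general not vertices, and the tracking argument proves nothing.

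The missing idea, which is the entire content of the paper's (Swan's) proof, is a convexity argument. Let $P$ be the 2-cell of the projected cell structure on $\C$ containing the projection of your bad boundary point; over $P$ the boundary lies on a single hemisphere $S(\alpha_i)$ of the collection, so your point witnesses that the locus $H$ of points where $S(\alpha_i)$ is strictly below $S_{\mu,\lambda}$ meets $P$. By the observation preceding Notation \ref{agreeingLine}, $H$ is all of $\C$ or an open half-plane. The cell $P$ is a compact convex polygon (boundedness uses the collection hypothesis: each 2-cell lies inside the projection disc of its hemisphere), hence $P$ equals the convex hull of its vertices. If no vertex of $P$ lay in $H$, all vertices would lie in the closed convex complement of $H$, hence so would $P$, contradicting $H\cap P\neq\emptyset$. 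Therefore some vertex of $P$ lies in $H$, and its lift to $S(\alpha_i)$ is a vertex of $\partial B(\alpha_1,\ldots,\alpha_n)$ strictly below $S_{\mu,\lambda}$, closing the contrapositive; singular points, which can occur as vertices at height zero, are never strictly below any hemisphere and so cause no trouble (that part of your discussion is correct). Without this convexity step --- one line in the paper: the open half-plane misses all vertices of $P$, hence misses their convex hull, which is $P$ --- your proof is incomplete. (A side remark: the finiteness of the cell structure modulo $\Gamma_\infty$ comes from the collection being finite modulo $\Gamma_\infty$ by construction, not from Theorem \ref{finiteNumber}, which concerns $B$ itself.)
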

In other words, no vertex $v$ of $\partial B(\alpha_1,\ldots,\alpha_n)$ can lie strictly below any hemisphere $S_{\mu,\lambda}$.
\\
Let us call \emph{height} the coordinate $\zeta$
 of the upper-half space model introduced at the beginning of section~\ref{The Bianchi fundamental polyhedron}.
With this criterion, it suffices to compute the cell structure of $\partial B(\alpha_1,\ldots,\alpha_n)$ to see if our choice of hemispheres gives us the Bianchi fundamental polyhedron.
This has only to be done modulo the translations of $\Gamma_\infty$, which preserve the height and hence the situations of being strictly below.
Thus our computations only need to be carried out on a finite set of hemispheres.

\subsection{Computing the cell structure in the complex plane} \label{cellStructure}

We will in a first step compute the image of the cell structure under the homeomorphism from $\partial B(\alpha_1,\ldots,\alpha_n)$ to $\C$ given by the vertical projection.
For each 2-cell of this structure, there is an associated hemisphere $S_{\mu,\lambda}$.
 The interior of this 2-cell consists of the points $z \in \C$,
 where all other hemispheres in our collection are strictly below  $S_{\mu,\lambda}$.
 Swan shows that this is the interior of a convex polygon.
The edges of these polygons lie on real lines in $\C$ specified in notation~\ref{agreeingLine}.

A vertex is an intersection point $z$ of any two of these lines involving the same hemisphere $S_{\mu,\lambda}$,
 if  all other hemispheres in our collection are strictly below, or agree with, $S_{\mu,\lambda}$ at~$z$.

\subsection{Lifting the cell structure back to hyperbolic space}
Now we can lift the cell structure back to $\partial B(\alpha_1,\ldots,\alpha_n)$, using the projection homeomorphism onto $\C$.  
The preimages of the convex polygons of the cell structure on $\C$, 
are totally geodesic hyperbolic polygons each lying on one of the hemispheres in our collection. 
These are the 2-cells of $\partial B(\alpha_1,\ldots,\alpha_n)$.

The edges of these hyperbolic polygons lie on the intersection arcs of pairs of hemispheres in our collection.
 As two Euclidean 2-spheres intersect, if they do so non-trivially,
 in a circle centred on the straight line which connects the two 2-sphere centres,
 such an intersection arc lies on a semicircle centred in the complex plane.
 The plane which contains this semicircle must be orthogonal to the connecting line, hence a vertical plane in $\Hy$.
 We can alternatively conclude the latter facts observing that an edge which two totally geodesic polygons have in common must be a geodesic segment.
Lifting the vertices becomes now obvious from their definition. This enables us to check Swan's termination criterion.

We will now sketch Swan's proof of this criterion.
Let $P$ be one of the convex polygons of the cell structure on $\C$. 
The preimage of $P$ lies on one hemisphere $S(\alpha_i)$ of our collection. 
Now the condition stated in theorem \ref{cancelCriterion} says that at the vertices of $P$, the hemisphere $S(\alpha_i)$ cannot be strictly below any other hemisphere.
The points where $S(\alpha_i)$  can be strictly below some hemisphere constitute an open half-plane in $\C$, and hence cannot lie in the convex hull of the vertices of $P$, which is $P$.
Theorem \ref{cancelCriterion} now follows because $\C$ is tessellated by these convex polygons.

\newpage

\section{Algorithms realizing Swan's concept} \label{realization}

From now on, we will work on putting Swan's concept into practice. 
We can reduce the set of hemispheres on which we carry out our computations, with the help of the following notion.
\begin{definition} \label{everywherebelow}
A hemisphere $S_{\mu,\lambda}$ is said to be \emph{everywhere below} a hemisphere $S_{\beta,\alpha}$  when: 
$$\vline \frac{\lambda}{\mu} -\frac{\alpha}{\beta}\vline \leq \frac{1}{|\beta|} - \frac{1}{|\mu|.}$$
\end{definition}
Note that this is also the case when $S_{\mu,\lambda} = S_{\beta,\alpha}$. 
Any hemisphere which is everywhere below another one, does not contribute to the Bianchi fundamental polyhedron, in the following sense.

\begin{proposition} \label{everywhereBelowHemispheres}
Let  $S(\alpha_n)$ be a hemisphere everywhere below some other hemisphere  $S(\alpha_i)$, where $i \in \{1,\ldots,n-1\}$. \\
Then  $B(\alpha_1,\ldots,\alpha_n) = B(\alpha_1,\ldots,\alpha_{n-1})$.
\end{proposition}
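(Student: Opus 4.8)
The plan is to prove the two inclusions separately, the inclusion $B(\alpha_1,\ldots,\alpha_n) \subseteq B(\alpha_1,\ldots,\alpha_{n-1})$ being immediate and the reverse one carrying the content. For the easy direction, I observe that $B(\alpha_1,\ldots,\alpha_n)$ is cut out by the same non-strict inequalities as $B(\alpha_1,\ldots,\alpha_{n-1})$ together with the extra inequalities coming from the translates $S(\alpha_n + \gamma)$, $\gamma \in \ringO$; imposing more constraints can only shrink the set.

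The heart of the proof is the geometric claim that if $S(\alpha_n)$ is everywhere below $S(\alpha_i)$ in the sense of Definition~\ref{everywherebelow}, then $S(\alpha_n)$ lies weakly below $S(\alpha_i)$ at every $z \in \C$ where $S(\alpha_n)$ is defined; writing $\alpha_i = \lambda_i/\mu_i$ and $\alpha_n = \lambda_n/\mu_n$ with radii $1/|\mu_i|$ and $1/|\mu_n|$, this means
$$\frac{1}{|\mu_n|^2} - |z - \alpha_n|^2 \;\leq\; \frac{1}{|\mu_i|^2} - |z - \alpha_i|^2 \qquad \text{whenever } |z - \alpha_n| \leq \tfrac{1}{|\mu_n|}.$$
To establish this I would set $z = \alpha_n + w$ with $|w| \leq 1/|\mu_n|$, expand $|z - \alpha_i|^2 = |\alpha_n - \alpha_i|^2 + 2\Real\big(\overline{(\alpha_n - \alpha_i)}\,w\big) + |w|^2$, cancel the common $|w|^2$, and reduce the desired inequality to
$$|\alpha_n - \alpha_i|^2 + 2\Real\big(\overline{(\alpha_n - \alpha_i)}\,w\big) \;\leq\; \frac{1}{|\mu_i|^2} - \frac{1}{|\mu_n|^2}.$$
Setting $d := |\alpha_n - \alpha_i|$, the Cauchy--Schwarz estimate $2\Real(\cdots) \leq 2d\,|w| \leq 2d/|\mu_n|$ bounds the left-hand side by $d\big(d + 2/|\mu_n|\big)$. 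Since $t \mapsto t\big(t + 2/|\mu_n|\big)$ is increasing and the everywhere-below hypothesis reads $d \leq 1/|\mu_i| - 1/|\mu_n|$, the claim follows from the identity $\big(1/|\mu_i| - 1/|\mu_n|\big)\big(1/|\mu_i| + 1/|\mu_n|\big) = 1/|\mu_i|^2 - 1/|\mu_n|^2$. The triangle inequality also shows that the disk over which $S(\alpha_n)$ is defined is contained in the disk over which $S(\alpha_i)$ is defined, so the height comparison makes sense.

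To finish, I note that the everywhere-below relation depends only on the difference of the two centres and on the two radii, all of which are unchanged when both hemispheres are translated by the same $\gamma \in \ringO$; hence $S(\alpha_n + \gamma)$ is everywhere below $S(\alpha_i + \gamma)$ for every $\gamma$. Now take any $(z,\zeta) \in B(\alpha_1,\ldots,\alpha_{n-1})$, which in particular lies above or on every $S(\alpha_i + \gamma)$. Fix $\gamma$: if $1/|\mu_n|^2 - |z - (\alpha_n + \gamma)|^2 < 0$ then $(z,\zeta)$ is automatically above or on $S(\alpha_n + \gamma)$; otherwise the claim gives $1/|\mu_n|^2 - |z - (\alpha_n + \gamma)|^2 \leq 1/|\mu_i|^2 - |z - (\alpha_i + \gamma)|^2 \leq \zeta^2$, the last inequality because $(z,\zeta)$ lies above or on $S(\alpha_i + \gamma)$. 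In either case $(z,\zeta)$ is above or on $S(\alpha_n + \gamma)$ for all $\gamma$, so by Notation~\ref{spaceAboveHemispheres} we get $(z,\zeta) \in B(\alpha_1,\ldots,\alpha_n)$, yielding the reverse inclusion and hence equality.

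The main obstacle is the geometric claim of the second paragraph: converting the centre-and-radius condition of Definition~\ref{everywherebelow} into a pointwise domination of heights valid over the whole disk of $S(\alpha_n)$. Once that monotonicity estimate is secured, the translation-invariance of the relation and the reduction to the single redundant hemisphere together with its $\Gamma_\infty$-orbit are routine.
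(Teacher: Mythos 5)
Your proof is correct, and it is organized differently from the paper's. The paper argues pointwise in the contrapositive: it takes an arbitrary point $(z,\zeta)$ \emph{strictly below} $S(\alpha_n)$ and applies the Euclidean triangle inequality in $\C\times\R$ to the three points $(z,\zeta)$, $(\alpha_n,0)$, $(\alpha_i,0)$; combined with Definition~\ref{everywherebelow} this shows $(z,\zeta)$ is also strictly below $S(\alpha_i)$, so the new hemisphere excludes nothing that was not already excluded. You instead prove a surface-to-surface statement --- that $S(\alpha_n)$ lies weakly below $S(\alpha_i)$ over every point of its shadow disk --- by expanding $|z-\alpha_i|^2$ and using Cauchy--Schwarz plus a monotonicity estimate; your Cauchy--Schwarz step is the paper's triangle inequality in thin disguise, so the analytic kernel is the same, but the decomposition is genuinely different. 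Your route buys two things the paper leaves tacit: you verify explicitly that the everywhere-below relation is invariant under simultaneous translation by $\gamma\in\ringO$, which is genuinely needed since $B(\alpha_1,\ldots,\alpha_n)$ in Notation~\ref{spaceAboveHemispheres} is cut out by all translates $S(\alpha_j+\gamma)$, and you treat the case of points $z$ outside the shadow disk of $S(\alpha_n+\gamma)$, where the height comparison is vacuous. The paper's route buys brevity: a single three-point triangle inequality replaces your expansion, and phrasing everything in terms of ``strictly below'' avoids both the case distinction and any explicit mention of the trivial inclusion.
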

\begin{proof}
Write $\alpha_n = \frac{\lambda}{\mu}$ and $\alpha_i = \frac{\theta}{\tau}$ with 
$\lambda,\mu,\theta,\tau \in \ringO$.
We take any point $(z,\zeta)$ strictly below $S_{\mu,\lambda}$ and show that it is also strictly below $S_{\tau,\theta}$.
In terms of notation \ref{spaceAboveHemispheres}, this problem looks as follows: we assume that the inequality $|\mu z-\lambda|^2 +|\mu|^2 \zeta^2 < 1$  is satisfied, and show that this implies the inequality $|\tau z-\theta|^2 +|\tau|^2 \zeta^2 < 1$. 
The first inequality can be transformed into \\
 \ ${\vline z-\frac{\lambda}{\mu}\vline \thinspace}^2 +\zeta^2 < \frac{1}{|\mu|^2}$.
Hence, $\sqrt{{\vline z-\frac{\lambda}{\mu}\vline \thinspace}^2 +\zeta^2} < \frac{1}{|\mu|}$. We will insert this into the triangle inequality for the Euclidean distance in $\C \times \R$ applied to the three points  $(z,\zeta)$, $(\frac{\lambda}{\mu},0)$ and $(\frac{\theta}{\tau},0)$, which is 
$$\sqrt{{\vline z-\frac{\theta}{\tau}\vline \thinspace}^2 +\zeta^2} \less \vline\frac{\lambda}{\mu} -\frac{\theta}{\tau} \vline +\sqrt{{\vline z-\frac{\lambda}{\mu}\vline\thinspace}^2 +\zeta^2}.$$
So we obtain
$\sqrt{{\vline z-\frac{\theta}{\tau}\vline\thinspace}^2 +\zeta^2} \less \vline\frac{\lambda}{\mu} -\frac{\theta}{\tau} \vline +\frac{1}{|\mu|}$.
By definition \ref{everywherebelow}, the expression on the right hand side is smaller than or equal to $\frac{1}{|\tau|}$. Therefore, we take the square and obtain
${\vline z-\frac{\theta}{\tau}\vline\thinspace}^2 +\zeta^2 < \frac{1}{|\tau|^2}$,
which is equivalent to the claimed inequality.
\end{proof}

Another notion that will be useful for our algorithm, is the following.

\begin{definition} \label{liftOnHemisphere}
Let $z \in \C$ be a point lying within the vertical projection of $S_{\mu,\lambda}$. Define the \emph{lift on the hemisphere}
 $S_{\mu,\lambda}$ of $z$ as the point on $S_{\mu,\lambda}$ the vertical projection of which is~$z$.
\end{definition}

\begin{notation}
Denote by the \emph{hemisphere list} a list into which we will record a finite number of hemisphere
s $S(\alpha_1)$, $\hdots$, $S(\alpha_n)$. Its purpose is to determine a set $B(\alpha_1,\ldots,\alpha_n)$ in order to approximate, and finally obtain, the Bianchi fundamental polyhedron.
\end{notation}

\newpage
\subsection{The algorithm computing the Bianchi fundamental polyhedron}

We now describe the algorithm that we have realized using Swan's description; it is decomposed into algorithms \ref{Get_Bianchi_Fundamental_Polyhedron} through 
\ref{Get_minimal_proper_vertex_height} below.

\bigskip
\textbf{Initial step.} We begin with the smallest value which the norm of a non-zero element $\mu \in \ringO$ can take, namely 1. Then $\mu$ is a unit in $\ringO$, and for any $\lambda \in \ringO$, the pair  $(\mu,\lambda)$ is unimodular. And we can rewrite the fraction $\frac{\lambda}{\mu}$ such that $\mu = 1$. We obtain the unit hemispheres (of radius 1), centred at the imaginary quadratic integers $\lambda \in \ringO$. We record into the hemisphere list the ones which touch the Bianchi fundamental polyhedron, i.e. the ones the centre of which lies in the fundamental rectangle $D_0$ (of notation \ref{fundamentalRectangle}) for the action of $\Gamma_\infty$ on the complex plane.

\medskip
\textbf{Step A.} Increase $|\mu|$ to the next higher value which the norm takes on elements of $\ringO$. Run through all the finitely many $\mu$ which have this norm. For each of these $\mu$, run through all the finitely many $\lambda$ with $\frac{\lambda}{\mu}$ in the fundamental rectangle $D_0$. Check that $(\mu,\lambda) = \ringO$ and that the hemisphere $S_{\mu,\lambda}$ is not everywhere below a hemisphere $S_{\beta,\alpha}$ in the hemisphere list.
If these two checks are passed, record $(\mu,\lambda)$ into the hemisphere list.

\medskip
We repeat step \textbf{A} until $|\mu|$ has reached an expected value. Then we check if we have found all the hemispheres  which touch the Bianchi fundamental polyhedron, as follows.

\medskip
\textbf{Step B.} 
We compute the lines $L(\frac{\alpha}{\beta},\frac{\lambda}{\mu})$ of definition \ref{agreeingLine}, over which two hemispheres agree, for all pairs $S_{\beta,\alpha}$,  $S_{\mu,\lambda}$ in the hemisphere list which touch one another. \\
Then, for each hemisphere $S_{\beta,\alpha}$, we compute the intersection points of each two lines $L(\frac{\alpha}{\beta},\frac{\lambda}{\mu})$ and $L(\frac{\alpha}{\beta},\frac{\theta}{\tau})$ referring to $\frac{\alpha}{\beta}$. \\
We drop the intersection points at which $S_{\beta,\alpha}$ is strictly below some hemisphere in the list. \\
We erase the hemispheres from our list, for which less than three intersection points remain. We can do this because a hemisphere which touches the \mbox{Bianchi} fundamental polyhedron only in two vertices shares only an edge with it and no 2-cell. \\
Now, the vertices of $B(\alpha_1,\ldots,\alpha_n) \cap D_\infty$ are the lifts of the remaining intersection points.
Thus we can check Swan's termination criterion (theorem \ref{cancelCriterion}), which we do as follows. We pick the lowest value $\zeta > 0$ for which $(z,\zeta) \in \Hy$ is the lift inside Hyperbolic Space of a remaining intersection point $z$. \\ 
If $\zeta \geq \frac{1}{|\mu|}$, then all (infinitely many) remaining hemispheres have radius equal or smaller than $\zeta$, so $(z,\zeta)$ cannot be strictly below them. So Swan's termination criterion is fulfilled, we have found the Bianchi fundamental polyhedron, and can proceed by determining its cell structure. \\
Else, $\zeta$ becomes the new expected value for $\frac{1}{|\mu|}$.
We repeat step \textbf{A} until $|\mu|$ reaches $\frac{1}{\zeta}$ and then proceed again with step \textbf{B}.

\bigskip


\begin{algorithm} 
\caption{Computation of the Bianchi fundamental polyhedron}
\label{Get_Bianchi_Fundamental_Polyhedron}
{
\begin{alginc}

\State \bf Input: \rm A square-free positive integer $m$.
\State \bf Output: \rm The hemisphere list,  containing entries $S(\alpha_1)$,\ldots,$S(\alpha_n)$ such that $B(\alpha_1,\ldots,\alpha_n) = B$. 
\State
\State Let $\ringO$ be the ring of integers in $\rationals(\sqrt{-m})$. 
\State Let $h_\ringO$ be the class number of $\ringO$. Compute $h_\ringO$.
\State Estimate the highest value for $|\mu|$ which will occur in 
notation \ref{spaceAboveHemispheres} by
\State the formula $E$ $ := \begin{cases}
\frac{5m}{2}h_\ringO -2m +\frac{1}{2}, & m \equiv 3 \mod 4, \\
21mh_\ringO -19m,            & \mathrm{else}.
\end{cases}$ 
\State \rm $\mathcal{N} := 1$.
\State Swan's\_cancel\_criterion\_fulfilled $:=$ false.
\State
\While{ Swan's\_cancel\_criterion\_fulfilled $=$ false, }

        \While{ $\mathcal{N} \leq$  $E$ }

                \State Execute algorithm \ref{Record_hemispheres} with argument $\mathcal{N}$.
                \State Increase $\mathcal{N}$ to the next greater value in 
                \State the set $\{\sqrt{n^2 m+j^2} \medspace | \medspace n,j \in \N \}$ 
                of values of the norm on $\ringO$.
        \EndWhile
        \State Compute $\zeta$ with algorithm \ref{Get_minimal_proper_vertex_height}.
        \If {$\zeta \geq \frac{1}{\mathcal{N}}$ ,}
                \State  \tt All (infinitely many) remaining hemispheres have radius  
                \State smaller than $\zeta$, 

                \State so $(z,\zeta)$ cannot be strictly below any of them. \rm
                \State  Swan's\_cancel\_criterion\_fulfilled := true.
        \Else 
                \State  \tt $\zeta$ becomes the new expected lowest value for $\frac{1}{\mathcal{N}}$: \rm
                \State $E$ := $\frac{1}{\zeta}$.
        \EndIf
\EndWhile
\end{alginc}
}
\end{algorithm}

\newpage
\begin{proposition}
The hemisphere list, as computed by algorithm \emph{\ref{Get_Bianchi_Fundamental_Polyhedron}}, 
determines the Bianchi fundamental polyhedron. This algorithm terminates within finite time.
\end{proposition}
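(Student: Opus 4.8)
The plan is to split the statement into two independent claims: \emph{correctness}, that at termination the recorded list satisfies $B(\alpha_1,\ldots,\alpha_n)=B$, and \emph{termination}, that the outer \textbf{while}-loop is entered only finitely often. Correctness I would deduce from Swan's criterion (Theorem~\ref{cancelCriterion}), whose hypothesis ``no vertex of $\partial B(\alpha_1,\ldots,\alpha_n)$ is strictly below any hemisphere'' I must verify at the moment the stopping test $\zeta\geq\frac{1}{\mathcal{N}}$ succeeds. This I would check in two cases. For a \emph{recorded} hemisphere: by the pruning in Step~B every intersection point lying strictly below a recorded hemisphere has already been dropped, so each surviving vertex lies strictly below none of them. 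For a hemisphere $S_{\mu,\lambda}$ that has \emph{not} been recorded we have $|\mu|\geq\mathcal{N}$, hence radius $\frac{1}{|\mu|}\leq\frac{1}{\mathcal{N}}\leq\zeta$; since a hemisphere attains its greatest height at its apex, a point of height $\geq\zeta$ cannot have a point of $S_{\mu,\lambda}$ strictly above it, so no vertex is strictly below $S_{\mu,\lambda}$ either. Together these give the hypothesis of Theorem~\ref{cancelCriterion}, while Proposition~\ref{everywhereBelowHemispheres} guarantees that discarding everywhere-below hemispheres in Step~A sacrifices no facet.

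Before invoking Theorem~\ref{cancelCriterion} I must also secure its standing assumption that the recorded hemispheres form a \emph{collection}. Here I would argue that once the norm bound has grown past the finite value $M:=\max\{|\mu|\}$ taken over the (finitely many, by Theorem~\ref{finiteNumber}) unimodular pairs whose hemisphere meets the true polyhedron $D=D_\infty\cap B$, the list contains every facet hemisphere of $B$. Since the floor of $B$ over the fundamental rectangle $D_0$ is assembled precisely from these hemispheres, every non-singular point of $\C$ lies, in vertical projection, in the open disc of one of them, i.e.\ is strictly below a recorded hemisphere; this is exactly the collection property, and it dovetails with Bianchi's characterisation that the singular points are the only points of $\C$ strictly below no hemisphere. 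The explicit starting value $E$ is Swan's a~priori estimate for $M$; should it prove too small the test fails and the algorithm enlarges it, so the collection property is in force by the time the stopping test can legitimately succeed.

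For termination, Theorem~\ref{finiteNumber} bounds $M<\infty$. Each unsuccessful pass sets $E:=\frac{1}{\zeta}$ with $\zeta<\frac{1}{\mathcal{N}}$, so the new $E$ strictly exceeds the current $\mathcal{N}$ and hence the previous bound; as $\mathcal{N}$ advances through the discrete, unbounded value set $\{\sqrt{n^2m+j^2}\mid n,j\in\N\}$ of norms on $\ringO$, it therefore tends to infinity after finitely many passes. Once $\mathcal{N}>M$ the list is complete, so $B(\alpha_1,\ldots,\alpha_n)=B$ and the lowest vertex height $\zeta$ stabilises at a fixed positive value $\zeta_{\min}$ (positive and attained because Theorem~\ref{finiteNumber} leaves only finitely many vertices, all in the interior of $\Hy$), whereas $\frac{1}{\mathcal{N}}\to 0$; hence after finitely many further passes $\frac{1}{\mathcal{N}}\leq\zeta_{\min}=\zeta$ and the test fires.

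I expect the collection hypothesis to be the main obstacle: one must turn Swan's singular-point analysis into the rigorous statement that the finitely many facet hemispheres of $B$ cover, in projection, every non-singular point of $D_0$, and confirm that $\zeta_{\min}>0$ (no genuine vertex descends to the boundary plane) so that the stopping test is guaranteed to become satisfiable. The ``no vertex below any hemisphere'' bookkeeping and the monotonicity of the bound $E$ are then comparatively routine.
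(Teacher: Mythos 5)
Your proof is correct and takes essentially the same route as the paper's: correctness from Swan's termination criterion (theorem \ref{cancelCriterion}) once every unrecorded hemisphere has radius at most $\zeta$, and termination from the finiteness theorem \ref{finiteNumber} together with the finite run-time of algorithms \ref{Record_hemispheres} and \ref{Get_minimal_proper_vertex_height} (propositions \ref{runtime2} and \ref{runtime3}). If anything, you are more careful than the paper, whose own proof neither verifies the ``collection'' hypothesis of theorem \ref{cancelCriterion} nor spells out why the stopping test $\zeta \geq \frac{1}{\mathcal{N}}$ eventually fires --- it only notes that $\mathcal{N}$ reaches the maximal $|\mu|$, after which one still needs your observation that $\zeta$ stabilises at some $\zeta_{\min}>0$ while $\frac{1}{\mathcal{N}}\to 0$.
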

\begin{proof} ${}$

\begin{itemize}
\item The value $\zeta$ is the minimal height of the non-singular vertices of the cell complex 
$\partial B(\alpha_1,\ldots,\alpha_n)$ determined by 
the hemisphere list $\{ S(\alpha_1),\ldots,S(\alpha_n) \}$.

All the hemispheres which are not in the list, have radius smaller than $\frac{1}{\mathcal{N}}$.
 By remark~\ref{expectedLimit}, the inequality $\zeta \geq \frac{1}{\mathcal{N}}$ will become satisfied;
 and then no non-singular vertex of $\partial B(\alpha_1,\ldots,\alpha_n)$ can be strictly below any of them. 
Hence by theorem~\ref{cancelCriterion}, $B(\alpha_1,\ldots,\alpha_n) = B$;
 and we obtain the Bianchi fundamental polyhedron as $B(\alpha_1,\ldots,\alpha_n) \cap D_\infty$.
\item We now consider the run-time.
By \mbox{theorem \ref{finiteNumber}}, the set of hemispheres 
$$\{ S_{\mu,\lambda} \medspace | \medspace S_{\mu,\lambda} \mathrm{ \medspace touches \medspace the \medspace Bianchi \medspace Fundamental \medspace Polyhedron} \}$$
is finite. So, there exists an $S_{\mu,\lambda}$ for which the norm of $\mu$ takes its maximum on this finite set.
The variable $\mathcal{N}$ reaches this maximum for $|\mu|$ after a finite number of steps;
and then Swan's termination criterion is fulfilled.
The latter steps require a finite run-time because of propositions \ref{runtime2} and \ref{runtime3}.
\end{itemize}
\end{proof}

Swan explains furthermore how to obtain an a priori bound for the norm of the $\mu \in \ringO$ occurring for such hemispheres $S_{\mu,\lambda}$. 
But he states that this upper bound for $|\mu|$ is much too large. So instead of the theory behind theorem \ref{finiteNumber}, we use Swan's termination criterion (theorem \ref{cancelCriterion} above) to limit the number of steps in our computations. We then get the following.

\begin{observation} \label{boundsForMu}
We can give bounds for $|\mu|$ in the cases where $K$ is of class number 1 or 2
 (there are nine cases of class number 1 and eighteen cases of class number 2, and we have done the computation for all of them).
They are the following:
$$\begin{cases} \txt{$K$ of class number 1:} \qquad \medspace |\mu| \leq \frac{|\Delta|+1}{2}, \\ \\
\txt{$K$ of class number 2:} \begin{cases} |\mu| \leq 3 |\Delta|, & m \equiv 3 \mod 4, \\
                                        |\mu| \leq (5+\frac{61}{116})|\Delta|, & \mathrm{else},
\end{cases} \end{cases}$$  
where $\Delta$ is the discriminant of $K = \rationals(\sqrt{-m})$, 
 i.e.,
$|\Delta| = \begin{cases} m, & m \equiv 3 \mod 4, \\
4m,  & \mathrm{else.} \end{cases}$
\end{observation}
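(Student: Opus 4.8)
The bounds asserted here are not obtained in closed form but are read off from the output of Algorithm~\ref{Get_Bianchi_Fundamental_Polyhedron}, so the plan is to reduce the statement to a finite verification. The imaginary quadratic fields of class number $1$ and $2$ are classically and completely known (class number one by Baker--Heegner--Stark, class number two by the analogous determination), so there are exactly the nine, respectively eighteen, fields referred to in the statement, a finite list of twenty-seven fields in total. For each of them, the preceding proposition guarantees that Algorithm~\ref{Get_Bianchi_Fundamental_Polyhedron} terminates and, by Swan's termination criterion (Theorem~\ref{cancelCriterion}), returns a hemisphere list $\{S(\alpha_1),\ldots,S(\alpha_n)\}$ with $B(\alpha_1,\ldots,\alpha_n)=B$; by Theorem~\ref{finiteNumber} this list is finite and consists of exactly the hemispheres $S_{\mu,\lambda}$ that touch the Bianchi fundamental polyhedron.

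The verification then proceeds as follows. First I would run the implementation~\cite{BianchiGP} on each of the twenty-seven fields and, for each $m$, extract the quantity $M(m):=\max\{\,|\mu| : S_{\mu,\lambda}\text{ lies in the returned hemisphere list}\,\}$, that is, the largest value of $|\mu|$ that actually occurs for a face-contributing hemisphere. Next I would compute $|\Delta|$ from the congruence class of $m$, so $|\Delta|=m$ when $m\equiv 3 \mod 4$ and $|\Delta|=4m$ otherwise, and form the ratio $M(m)/|\Delta|$. Since a single uniform constant is claimed within each class-number and congruence regime, it suffices to take the maximum of these ratios over the relevant finite family and to check that it does not exceed the stated constant; the extremal field is precisely the one that pins the constant down, and the unusual value $5+\frac{61}{116}$ in the $m\not\equiv 3 \mod 4$, class-number-two case reflects exactly such a tightly saturated worst case.

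The only genuine obstacle is completeness rather than depth: one must be certain that the algorithm has actually terminated for every field in the list, so that $M(m)$ records the true maximal $|\mu|$ and not merely the maximum over a premature truncation of the hemisphere list. This is exactly what Swan's termination criterion supplies, for once $\zeta\ge\frac{1}{\mathcal N}$ no hemisphere absent from the list (all of which then have radius below $\frac{1}{\mathcal N}$) can lie strictly below a vertex of $\partial B(\alpha_1,\ldots,\alpha_n)$, so by Theorem~\ref{cancelCriterion} the list is complete. This is also why the computation must be carried out with the validated implementation of~\cite{BianchiGP}. Granting termination for each field, the statement is established simply by collecting the twenty-seven computed values $M(m)$ and comparing each against the displayed formula.
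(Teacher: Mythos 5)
Your proposal is correct and coincides with the paper's own treatment: the paper presents this as an \emph{observation} obtained precisely by running the algorithm (terminating via Swan's criterion, Theorem~\ref{cancelCriterion}, rather than Swan's a priori bound) on the complete, classically known finite list of class number $1$ and $2$ fields, and recording the maximal $|\mu|$ that occurs in each regime. The paper offers no further proof beyond this finite computational verification, which is exactly the reduction you describe.
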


\vbox{
\begin{remark} \label{expectedLimit}
In algorithm \ref{Get_Bianchi_Fundamental_Polyhedron}, we have chosen the value $E$ by an extra\-polation formula for observation \ref{boundsForMu}.
If this is greater than the  exact bound for $|\mu|$, the algorithm computes additional hemispheres which do not contribute to the Bianchi fundamental polyhedron.
On the other hand, if $E$ is smaller than the  exact bound for $|\mu|$, it will be increased in the outer while loop of the algorithm, until it is sufficiently large. But then, the algorithm performs some preliminary computations of the intersection lines and vertices, which cost additional run-time. Thus our extra\-polation formula is aimed at choosing $E$ slightly greater than the  exact bound for $|\mu|$ we expect.
\end{remark}
}

\begin{algorithm}
\caption{Recording the hemispheres of radius $\frac{1}{\mathcal{N}}$} 
\label{Record_hemispheres}
{
\begin{alginc}
\State \bf Input: \rm The value $\mathcal{N}$, and the hemisphere list (empty by default).
\State \bf Output: \rm The hemisphere list with some hemispheres of radius $\frac{1}{\mathcal{N}}$ added.
\State
\For {$a$ running from $0$ through $\mathcal{N}$ within $\Z$, }
  \For {$b$ in $\Z$ such that $|a +b\omega| = \mathcal{N}$, }
    \State Let $\mu := a +b \omega$.
    \For {all the $\lambda \in \ringO$ with $\frac{\lambda}{\mu}$ in the fundamental rectangle $D_0$,}
        \If{ the pair $(\mu,\lambda)$ is unimodular,}
         \State Let $\mathcal{L}$ be the length of the hemisphere list. 
         \State everywhere\_below := false, \qquad $j := 1$.
         \While{ everywhere\_below = false and $j \leq \mathcal{L}$,}
          \State Let $S_{\beta,\alpha}$ be the $j$'th entry in the hemisphere list;
          \If{ $S_{\mu,\lambda}$ is everywhere below $S_{\beta,\alpha}$,}
                \State everywhere\_below := true.
          \EndIf
          \State Increase $j$ by $1$.
         \EndWhile
         \If{ everywhere\_below = false,}
                \State Record $S_{\mu,\lambda}$ into the hemisphere list.
         \EndIf
        \EndIf
    \EndFor
  \EndFor
\EndFor
\State
\State \tt We recall that the notion ``everywhere below'' has been made precise in definition \ref{everywherebelow}; and that the fundamental rectangle $D_0$ has been specified in notation \ref{fundamentalRectangle}. \rm
\end{alginc}
}
\end{algorithm}

\begin{proposition} \label{runtime2}
Algorithm \emph{\ref{Record_hemispheres}} finds all the hemispheres of radius $\frac{1}{\mathcal{N}}$, on which a 2-cell of the Bianchi fundamental polyhedron can lie. This algorithm terminates within finite time.  
\end{proposition}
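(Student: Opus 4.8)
The statement has two parts, completeness (``finds all such hemispheres'') and termination, and I would treat them separately. Throughout, recall that a hemisphere $S_{\mu,\lambda}$ has radius $1/|\mu|$, so ``radius $\tfrac{1}{\mathcal{N}}$'' means exactly $|\mu| = \mathcal{N}$, and that only unimodular pairs $(\mu,\lambda)$ enter the definition of $B$. The plan for completeness is to show that every unimodular pair with $|\mu| = \mathcal{N}$ whose hemisphere supports a $2$-cell of the Bianchi fundamental polyhedron is, after normalising by the translation group $\Gamma_\infty$, enumerated by the three nested loops of Algorithm~\ref{Record_hemispheres}, and that the only pairs the algorithm skips are provably irrelevant.

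For the normalisation step I would use that $\Gamma_\infty$ acts on $\C$ by the translations $z \mapsto z + \gamma$, $\gamma \in \ringO$, and that $D_0$ of Notation~\ref{fundamentalRectangle} is a fundamental domain for this action. Such a translation carries $S_{\mu,\lambda}$ to $S_{\mu, \lambda + \gamma\mu}$: the radius $1/|\mu| = 1/\mathcal{N}$ is preserved, unimodularity of the pair is preserved, and the centre $\lambda/\mu$ is shifted by $\gamma$. Hence every $\Gamma_\infty$-orbit of radius-$\tfrac{1}{\mathcal{N}}$ hemispheres has a unique representative with centre in $D_0$, and since $B(\alpha_1,\ldots,\alpha_n)$ in Notation~\ref{spaceAboveHemispheres} is built from all $\Gamma_\infty$-translates of the chosen centres, recording one representative per orbit loses no information. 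The loop over $a$, then over $b$ with $|a+b\omega| = \mathcal{N}$, then over $\lambda$ with $\lambda/\mu \in D_0$, visits exactly these unimodular representatives (the restriction to $a \ge 0$ costing nothing, since the unit $-1$ fixes each hemisphere, $S_{\mu,\lambda} = S_{-\mu,-\lambda}$, so every hemisphere with centre in $D_0$ is still reached).

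The crux is justifying the one thing the algorithm throws away: a pair is recorded unless $S_{\mu,\lambda}$ is \emph{everywhere below} (Definition~\ref{everywherebelow}) some hemisphere already in the list. Here I would invoke Proposition~\ref{everywhereBelowHemispheres}: if $S_{\mu,\lambda}$ is everywhere below a listed hemisphere then $B$ is unchanged whether or not $S_{\mu,\lambda}$ is added, so $S_{\mu,\lambda}$ supports no $2$-cell of the fundamental polyhedron and discarding it is safe. The subtlety is the transitivity needed when the dominating hemisphere has itself been discarded: ``everywhere below'' is transitive, since $|c_1 - c_2| \le r_2 - r_1$ and $|c_2 - c_3| \le r_3 - r_2$ give $|c_1 - c_3| \le r_3 - r_1$ by the triangle inequality, so a chain of discards always terminates at a hemisphere genuinely in the list. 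I would also note that two hemispheres of equal radius can be everywhere below one another only if they coincide (the inequality then forces equal centres), so the order in which radius-$\tfrac{1}{\mathcal{N}}$ hemispheres are processed within a single call is immaterial. This step, rather than the bookkeeping, is where I expect the real content to sit.

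Finally, termination is routine: each loop ranges over a finite set. The index $a$ runs through the finite range $0,\ldots,\mathcal{N}$; for fixed $a$ the equation $|a+b\omega| = \mathcal{N}$ has only finitely many integer solutions $b$ because the norm form on $\ringO$ is positive definite; the admissible $\lambda$ form the finite set $\ringO \cap \mu D_0$ of lattice points in a bounded region; and the inner \textbf{while} loop checking ``everywhere below'' runs over the finite hemisphere list. Consequently every call to Algorithm~\ref{Record_hemispheres} halts after finitely many operations.
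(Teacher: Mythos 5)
Your proof is correct and follows essentially the same route as the paper's: completeness comes from enumerating, modulo the $\Gamma_\infty$-translates already built into Notation~\ref{spaceAboveHemispheres}, all unimodular pairs of norm $\mathcal{N}$ with centre in $D_0$, with Proposition~\ref{everywhereBelowHemispheres} guaranteeing that a discarded hemisphere can carry no $2$-cell, while termination follows from the finiteness of each loop. The paper's version is terser, and your transitivity digression, though correct, is not actually needed: the algorithm only ever discards a candidate by comparison with hemispheres genuinely present in the list, so Proposition~\ref{everywhereBelowHemispheres} applies directly in every instance.
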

\begin{proof} ${}$

\begin{itemize} 
\item Directly from the definition of the hemispheres $S_{\mu,\lambda}$, it follows that the radius is given by $\frac{1}{|\mu|}$. So our algorithm runs through all $\mu$ in question.
By construction of the Bianchi fundamental polyhedron $D$, the hemispheres on which a 2-cell of $D$ lies must have their centre in the fundamental rectangle $D_0$.
By proposition \ref{everywhereBelowHemispheres}, such hemispheres cannot be everywhere below some other hemisphere in the list.
\item Now we consider the run-time of the algorithm. There are finitely many $\mu \in \ringO$ the norm of which takes a given value. 
And for a given $\mu$, there are finitely many $\lambda \in \ringO$ such that 
$\frac{\lambda}{\mu}$ is in the fundamental rectangle $D_0$.  Therefore, this algorithm consists of finite loops and terminates within finite time.
\end{itemize}
\end{proof}

\begin{proposition} \label{runtime3}
Algorithm \emph{\ref{Get_minimal_proper_vertex_height}} finds the minimal height occurring amongst the non-singular vertices of $\partial B(\alpha_1,\ldots,\alpha_n)$. This algorithm erases only such hemispheres from the list, which do not change $\partial B(\alpha_1,\ldots,\alpha_n)$.
It terminates within finite time.
\end{proposition}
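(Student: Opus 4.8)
The plan is to verify the three assertions in turn, relying on the description of the cell structure of $\partial B(\alpha_1,\ldots,\alpha_n)$ in subsection~\ref{cellStructure} together with its lift to hyperbolic space.

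For the statement about the minimal height, I would first observe that algorithm~\ref{Get_minimal_proper_vertex_height} performs precisely the operations of \textbf{Step B}: for each hemisphere $S_{\beta,\alpha}$ of the list it forms the agreement lines $L(\tfrac{\alpha}{\beta},\tfrac{\lambda}{\mu})$ of notation~\ref{agreeingLine} against the hemispheres it touches, intersects each pair of these lines referring to $S_{\beta,\alpha}$, and discards the intersection points at which $S_{\beta,\alpha}$ is strictly below some hemisphere of the list. By the vertex characterisation of subsection~\ref{cellStructure}, a point $z$ surviving this process is exactly the vertical projection of a vertex of $\partial B(\alpha_1,\ldots,\alpha_n)$, namely an intersection of two agreement lines referring to $S_{\beta,\alpha}$ at which every other hemisphere is strictly below, or agrees with, $S_{\beta,\alpha}$. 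Lifting $z$ onto $S_{\beta,\alpha}$ as in definition~\ref{liftOnHemisphere}, the hemisphere equation $|\beta z-\alpha|^2+|\beta|^2\zeta^2=1$ gives the height $\zeta=\sqrt{\tfrac{1}{|\beta|^2}-|z-\tfrac{\alpha}{\beta}|^2}$. Since the algorithm retains only the lifts with $\zeta>0$, and since by subsection~\ref{singular} the singular vertices are precisely those lying in $\C=\partial\Hy$ at height $0$, the surviving lifts are exactly the non-singular vertices; their minimum is the asserted minimal height.

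The hard part will be the second assertion, that erasing a hemisphere with fewer than three surviving intersection points does not alter $\partial B(\alpha_1,\ldots,\alpha_n)$. Here I would argue in two steps. By Swan's result recalled in subsection~\ref{cellStructure}, the open locus $U\subset\C$ on which $S_{\beta,\alpha}$ is strictly above every other hemisphere is the interior of a convex polygon whose corners are the surviving intersection points; as $S_{\beta,\alpha}$ must have positive height over $U$, this polygon is contained in the open disc of radius $\tfrac{1}{|\beta|}$ about $\tfrac{\alpha}{\beta}$ and is therefore bounded. A bounded convex polygon with non-empty interior has at least three vertices, so fewer than three surviving intersection points force $U=\varnothing$. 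I would then show directly that a hemisphere with $U=\varnothing$ is redundant: if some $(z,\zeta)$ lay above or on every remaining hemisphere yet strictly below $S_{\beta,\alpha}$, then $z$ would sit in the open disc of $S_{\beta,\alpha}$ with $\zeta$ less than its height there; but $z\notin U$ furnishes another hemisphere of the list whose height at $z$ is at least that of $S_{\beta,\alpha}$, hence exceeds $\zeta$, so $(z,\zeta)$ is strictly below it, a contradiction. Thus $B(\alpha_1,\ldots,\alpha_n)$, and with it $\partial B(\alpha_1,\ldots,\alpha_n)$, is unchanged by the erasure; this is the same domination mechanism as in proposition~\ref{everywhereBelowHemispheres}, but measured against the whole list rather than a single hemisphere.

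Finite termination is then routine: the hemisphere list has finite length $n$, so forming agreement lines ranges over finitely many touching pairs, intersecting lines and testing the finitely many strictly-below conditions at each intersection point run over finite data, and computing and minimising the resulting finitely many heights are finite loops. Hence the algorithm is a finite composition of finite loops and halts. I expect the only genuine subtlety to be the boundedness argument underpinning the vertex count in the second step, which the containment of each polygon in its hemisphere's projection disc resolves cleanly.
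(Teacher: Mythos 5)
Your proof is correct and follows essentially the same route as the paper's: identify the surviving intersection points with the vertices of the cell structure from subsection~\ref{cellStructure}, justify the erasures through Swan's convex-polygon description of the $2$-cells, and obtain termination by counting finite loops over the hemisphere list. The only differences are ones of emphasis: you spell out the redundancy argument for erased hemispheres (bounded convex polygon, hence at least three corners, hence empty interior, hence domination by the rest of the list), which the paper compresses into a single sentence, whereas the paper makes explicit the observation you leave implicit, namely that heights are invariant under $\Gamma_\infty$, so the minimum over the finitely many intersection points computed from the list representatives is indeed the minimum over all the (infinitely many, $\Gamma_\infty$-periodic) non-singular vertices of $\partial B(\alpha_1,\ldots,\alpha_n)$.
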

\begin{proof} ${}$

\begin{itemize}
\item The heights of the points in $\Hy$ are preserved by the action of the translation group $\Gamma_\infty$, so we only need to consider representatives in the fundamental domain $D_\infty$ for this action. Our algorithm computes the entire cell structure of \mbox{$\partial B(\alpha_1,\ldots,\alpha_n) \cap D_\infty$,} as described in subsection \ref{cellStructure}.
The number of lines to intersect is smaller than the square of the length of the hemisphere list, and thus finite. As a consequence, the minimum of the height has to be taken only on a finite set of intersection points, whence the first claim.
\item  If a cell of $\partial B(\alpha_1,\ldots,\alpha_n)$ lies on a hemisphere, then its vertices are lifts of intersection points. So we can erase the hemispheres which are strictly below some other hemispheres at all the intersection points, without changing $\partial B(\alpha_1,\ldots,\alpha_n)$.
\item Now we consider the run-time.
This algorithm consists of loops running through the hemisphere list, which has finite length. Within one of these loops, there is a loop running through the set of pairs of lines $L(\frac{\alpha}{\beta},\frac{\lambda}{\mu})$. A (far too large) bound for the cardinality of this set is given by the fourth power of the length of the hemisphere list.
\\
The steps performed within these loops are very delimited and easily seen to be of finite run-time.
\end{itemize}
\end{proof}

\begin{algorithm}
\caption{Computing the minimal proper vertex height}
\label{Get_minimal_proper_vertex_height}
{
\begin{alginc}
\State \bf Input: \rm The hemisphere list $\{ S(\alpha_1),\ldots,S(\alpha_n) \}$.
\State \bf Output: \rm The lowest height $\zeta$ of a non-singular vertex of $\partial B(\alpha_1,\ldots,\alpha_n)$. And the hemisphere list with some hemispheres removed which do not touch the Bianchi fundamental polyhedron.
\State
\For {all pairs $S_{\beta,\alpha}$,  $S_{\mu,\lambda}$ in the hemisphere list which touch one another,} 
        \State compute the line $L(\frac{\alpha}{\beta},\frac{\lambda}{\mu})$
                of notation \ref{agreeingLine}.
\EndFor 
\State
\For {each hemisphere $S_{\beta,\alpha}$ in the hemisphere list,} 
   \For{each two lines $L(\frac{\alpha}{\beta},\frac{\lambda}{\mu})$ and $L(\frac{\alpha}{\beta},\frac{\theta}{\tau})$ referring to $\frac{\alpha}{\beta}$,}
        \State Compute the intersection point of  $L(\frac{\alpha}{\beta},\frac{\lambda}{\mu})$ and $L(\frac{\alpha}{\beta},\frac{\theta}{\tau})$, if it exists.
   \EndFor
\EndFor
\State  Drop the intersection points at which $S_{\beta,\alpha}$ is strictly below some hemisphere in the list. 
\State  Erase the hemispheres from our list, for which no intersection points remain. 
\State \tt Now the vertices of \rm $B(\alpha_1,\ldots,\alpha_n) \cap D_\infty$ \tt are the lifts (specified in definition \ref{liftOnHemisphere}) on the appropriate hemispheres of the remaining intersection points.
\State \rm Pick the lowest value $\zeta > 0$ for which $(z,\zeta) \in \Hy$ is the lift on some hemisphere of a remaining intersection point $z$. 
\State Return $\zeta$.
\end{alginc}
}
\end{algorithm}


\subsection{The cell complex and its orbit space} \label{Cell complex}

With the method described in subsection~ \ref{cellStructure}, we obtain a cell structure on the \mbox{boundary} of the Bianchi fundamental polyhedron.
The cells in this structure which touch the cusp~$\infty$ are easily determined:
they are four 2-cells each lying on one of the Euclidean vertical planes bounding the fundamental domain $D_\infty$ for $\Gamma_\infty$ specified in notation~\ref{fundamentalRectangle}; and four 1-cells each lying on one of the intersection lines of these planes.
The other \mbox{2-cells} in this structure lie each on one of the hemispheres determined with our realization of Swan's algorithm.

As the Bianchi fundamental polyhedron is a hyperbolic polyhedron up to some missing cusps, its boundary cells can be oriented as its facets.
Once the cell structure is subdivided until the cells are fixed pointwise by their stabilisers,
 this cell structure with orientation is transported onto the whole hyperbolic space by the action of~$\Gamma$.

\subsection{Computing the vertex stabilisers and identifications} 
Let us state explicitly the $\Gamma$-action on the upper-half space model~$\Hy$,
 in the form in which we will use it rather than in its historical form.
 
\begin{lemma}[Poincar\'e]  \label{operationFormula}
If $\gamma =$\scriptsize$\mat$\normalsize$ \in \mathrm{GL}_2(\C)$, the action of $\gamma$ on $\Hy$ is given by
\\ 
\mbox{$\gamma \cdot (z,\zeta) = (z',\zeta')$,} where
$$ \zeta' = \frac{|\det \gamma|\zeta}{|cz-d|^2 +\zeta^2|c|^2},\qquad z' = \frac{\left(\thinspace\overline{d -cz}\thinspace\right)(az-b) -\zeta^2\bar{c}a}{|cz-d|^2 +\zeta^2|c|^2}.$$
\end{lemma}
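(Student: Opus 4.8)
The plan is to realize the upper-half space $\Hy$ inside Hamilton's quaternions and to compute the quaternionic fractional linear transformation directly. I write a quaternion as $u + vj$ with $u, v \in \C = \R \oplus \R i$, recall that $j^2 = -1$ and that $z\,j = j\,\bar z$ for every $z \in \C$, and identify the point $(z,\zeta) \in \Hy$ with the quaternion $q := z + \zeta j$. Then $|u + vj|^2 = |u|^2 + |v|^2$, the quaternionic conjugate of $u + vj$ is $\bar u - vj$, and hence $(u+vj)^{-1} = (\bar u - vj)/(|u|^2 + |v|^2)$. Under this dictionary the condition $\zeta > 0$ singles out exactly the quaternions whose $j$-component is a positive real, and the action is the M\"obius map $q \mapsto (aq - b)(-cq + d)^{-1}$ in the sign convention producing the formula of the lemma.

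First I would treat the case $\det\gamma = 1$. With $q = z + \zeta j$ the denominator is $-cq + d = (d - cz) + (-c\zeta)j$, whose squared norm is the quantity $|cz - d|^2 + |c|^2\zeta^2$ appearing throughout the statement; inverting it gives $(-cq+d)^{-1} = \bigl(\overline{d - cz} + c\zeta j\bigr)/\bigl(|cz-d|^2 + |c|^2\zeta^2\bigr)$. Multiplying on the left by $aq - b = (az - b) + a\zeta j$ and pushing every $j$ past the complex scalars by $j\,\bar w = w\,j$, I would collect the product into its $\C$-part and its $j$-part. The $j$-coefficient collapses to $\zeta(ad - bc) = \zeta$, reproducing $\zeta' = \zeta/\bigl(|cz-d|^2 + |c|^2\zeta^2\bigr)$, while the $\C$-part becomes exactly $(\overline{d - cz})(az - b) - \bar c a\,\zeta^2$ over the same denominator. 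This is the asserted formula when $|\det\gamma| = 1$. The delicate point is the non-commutativity: the four products of $(az - b) + a\zeta j$ with $\overline{d-cz} + c\zeta j$ must be reduced using $j^2 = -1$ and $j\,\bar w = w\,j$, which sorts them correctly into the $\C$-part (the purely complex product together with the $j\cdot j$ term) and the $j$-part (the two mixed terms).

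It remains to pass from determinant one to all of $\mathrm{GL}_2(\C)$, and this is where the factor $|\det\gamma|$ enters. Since $\C$ is algebraically closed I would choose $\lambda \in \C^\times$ with $\lambda^2 = \det\gamma$ and set $\gamma_0 := \lambda^{-1}\gamma$, so that $\det\gamma_0 = 1$ and $\gamma$, $\gamma_0$ induce the same transformation of $\Hy$ (the M\"obius action is unchanged when the matrix is rescaled by a nonzero complex scalar, as numerator and denominator scale together). The previous step gives the formula for $\gamma_0$. To conclude, I would observe that both expressions in the statement are invariant under $\gamma \mapsto \lambda\gamma$: replacing $(a,b,c,d)$ by $(\lambda a, \lambda b, \lambda c, \lambda d)$ multiplies each numerator and each denominator by $|\lambda|^2$, using $|\det(\lambda\gamma)| = |\lambda|^2|\det\gamma|$. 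Hence the right-hand sides evaluated at $\gamma$ coincide with those at $\gamma_0$, and the lemma holds for every $\gamma \in \mathrm{GL}_2(\C)$.

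I expect the main obstacle to be the careful bookkeeping of the quaternionic non-commutativity in the second paragraph, keeping each complex scalar on the correct side of $j$ so that the $\C$- and $j$-components separate as claimed, together with fixing the sign convention of the action so that the output matches the $d - cz$ and $az - b$ occurring in the statement rather than the more familiar $cz + d$ and $az + b$.
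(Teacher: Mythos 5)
The paper contains no proof of this lemma at all: it is quoted as Poincar\'e's classical formula for the ``familiar action'' and simply used. So your quaternionic verification is genuinely new content relative to the paper; it is the standard textbook route (essentially the derivation in \cite{ElstrodtGrunewaldMennicke}), and your determinant-one computation is correct. With $q=z+\zeta j$, the denominator $-cq+d=(d-cz)+(-c\zeta)j$ has norm squared $|cz-d|^2+|c|^2\zeta^2$, its inverse is $\bigl(\overline{d-cz}+c\zeta j\bigr)/\bigl(|cz-d|^2+|c|^2\zeta^2\bigr)$, and after sorting the four products by the rule $zj=j\bar z$ the $j$-coefficient is $\zeta(ad-bc)$ over that norm while the $\C$-part is $\bigl(\overline{d-cz}\bigr)(az-b)-\zeta^2\bar c a$ over the same norm, exactly as asserted when $\det\gamma=1$.

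There is, however, one flaw in your passage to $\mathrm{GL}_2(\C)$, and it is precisely the non-commutativity trap you warn about in your second paragraph. It is false that ``numerator and denominator scale together'' in the quaternions: for $\lambda\in\C^\times$ one has
$$\bigl(\lambda(aq-b)\bigr)\,\bigl(\lambda(-cq+d)\bigr)^{-1}=\lambda\,(aq-b)(-cq+d)^{-1}\,\lambda^{-1},$$
i.e.\ rescaling conjugates the quaternionic M\"obius map by $\lambda$, and conjugation by $\lambda$ sends $z'+\zeta' j$ to $z'+(\lambda/\bar\lambda)\zeta' j$, which lies in $\Hy$ only when $\lambda$ is real. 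Concretely, for $\gamma=\begin{pmatrix}i&0\\0&i\end{pmatrix}$ the naive formula sends $j$ to $-j\notin\Hy$; so the naive quaternionic formula does not even define an action of $\mathrm{GL}_2(\C)$ on $\Hy$, and for the same reason your remark that the determinant-one case settles all $|\det\gamma|=1$ is off (for $\det\gamma=e^{i\theta}$ the $j$-part comes out as $e^{i\theta}\zeta$). The statement you actually need --- that $\gamma$ and $\gamma_0=\lambda^{-1}\gamma$ induce the same transformation of $\Hy$ --- is true, but for a different reason: the $\mathrm{GL}_2(\C)$-action is by definition the one inherited through $\mathrm{PGL}_2(\C)=\mathrm{PSL}_2(\C)$, equivalently the unique isometric extension of the boundary fractional linear action on $\C\cup\{\infty\}$, where scaling invariance does hold because $\C$ is commutative. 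Once you replace the parenthetical cancellation claim by this definitional remark, the rest of your third paragraph --- the correct observation that each numerator and denominator in the stated formulas picks up a factor $|\lambda|^2$ under $\gamma\mapsto\lambda\gamma$ --- completes the proof.
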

From this operation formula, we establish equations and inequalities on the entries of a matrix sending a given point
 $(z,\zeta)$ to another given point $(z',\zeta')$ in $\Hy$.
 We will use them in algorithm~\ref{getVertexIdentifications3mod4}
 to compute such matrices.
For the computation of the vertex stabilisers, we have $(z,\zeta) = (z',\zeta')$,
  which simplifies the below equations and inequalities as well as the pertinent algorithm. 
First, we fix a basis for $\ringO$ as the elements $1$ and 
$$\omega := \begin{cases} 
\sqrt{-m}, &  m \equiv 1 \medspace \mathrm{or} \medspace 2 \mod 4, \\
 -\frac{1}{2}+\frac{1}{2}\sqrt{-m},  &  m \equiv 3 \mod 4. \end{cases}$$
As we have put $m \neq 1$ and $m \neq 3$, the only units in the ring $\ringO$ are $\pm 1$. 
We will use the notations $\left\lceil x \right\rceil := \min \{n \in \Z \suchthat n \geq x \} $  
 and $\lfloor x \rfloor := \max \{n \in \Z \suchthat n \leq x \} $ for $x \in \R$.

\begin{algorithm}
\caption{Computation of the matrices identifying two points in $\Hy$.}
\label{getVertexIdentifications3mod4}
{
\begin{alginc}
\State \bf Input: \rm The points $(z, r)$, $(\zeta,\rho)$ in the interior of $\Hy$, where $z$, $\zeta \in K$ and $r^2$, $\rho^2 \in \rationals$.
\State \bf Output: \rm The set of matrices \scriptsize $\mat$ \normalsize $\in$ SL$_2(\ringO_{-m})$, $m \equiv 3 \mod  4$, with nonzero entry~$c$, sending the first of the input points to the second one.
\State 
\State \tt $c$ will run through $\ringO$ with \rm \mbox{$0 < |c|^2 \leq \frac{1}{r\rho}$.}  
\State Write $c$ in the basis as $ j +k\omega$ , where $j,k \in \Z$. \rm
   \State
   \For{$j$ running from $-\left\lceil\sqrt{\frac{1+\frac{1}{m}}{r\rho}}\thinspace\right\rceil$
                through $\left\lceil\sqrt{\frac{1+\frac{1}{m}}{r\rho}}\thinspace\right\rceil$}
     \State 
     \State $ k_{\mathrm{limit}}^\pm 
                := 2\frac{j}{m+1} \pm 2\frac{\sqrt{\frac{m+1}{r\rho}-j^2m}}{m+1}$. 
     \State
     \For{ $k$ running from $\lfloor k_{\mathrm{limit}}^-\rfloor$
                through    $ \lceil k_{\mathrm{limit}}^+\rceil$}
        \State $c := j + k\omega$;
        \If{$|c|^2 \leq \frac{1}{r\rho}$ and $c$ nonzero,}
            \State Write $cz$ in the basis as $R(cz) + W(cz) \omega$
                with $R(cz), W(cz) \in \rationals$.
            \State \tt   $d$ will run through $\ringO$
                 with $|cz-d|^2 +r^2|c|^2= \frac{r}{\rho}$. \rm
            \State Write $d$ in the basis as $ q +s\omega$, where $q,s \in \Z$. \rm
            \State \tt $s_{\mathrm{limit}}^\pm 
                        := W(cz) \pm 2\sqrt{ \frac{\frac{r}{\rho} -r^2 |c|^2}{m}}$. \rm
            \For{  $s$ running from $ \lfloor s_{\mathrm{limit}}^-\rfloor$
                        through  $ \lceil s_{\mathrm{limit}}^+\rceil$}
              \State
              \State $\Delta :=\frac{r}{\rho} -r^2|c|^2 -m \left( \frac{W(cz)}{2} -\frac{s}{2}\right)^2$;
              \If{ $\Delta$ is a rational square,}
                \State $q_\pm := R(cz) -\frac{W(cz)}{2} +\frac{s}{2} \pm \sqrt{ \Delta}$. 
                \State Do the following for both $q_\pm=q_+$ and $q_\pm =q_-$ if $\Delta \neq 0$.
                \If{ $q_\pm \in \Z$,}
                  \State $d :=  q_\pm + s\omega $;
                  \If{ $|cz -d|^2 +r^2 |c|^2 = \frac{r}{\rho}$ and $(c,d)$ unimodular, }
                    \State $ a := \frac{\rho}{r}\overline{d} -\frac{\rho}{r}\overline{cz} -c\zeta$.
                    \If{$a$ is in the ring of integers,}
                        \State \tt $b$ is determined by the determinant $1$: \rm
                        \State $ b := \frac{ad -1}{ c}$.
                        \If {$b$ is in the ring of integers,}
                                \State Check that \scriptsize $\mat$\normalsize$ \cdot (z,r) = (\zeta,\rho)$.
                                \State Return \scriptsize$ \mat $\normalsize.
                        \EndIf
                    \EndIf
                  \EndIf
                \EndIf
              \EndIf
        \EndFor
    \EndIf
  \EndFor
\EndFor
\end{alginc}
}
\end{algorithm}

\begin{lemma} \label{jAndk}
Let $m \equiv 3 \mod 4.$ Let \scriptsize $\mat$\normalsize$ \in \mathrm{SL}_2(\ringO)$
  be a matrix sending $(z,r)$ to \mbox{$(\zeta,\rho) \in \Hy$.}
 Write $c$ in the basis as $ j +k\omega$ , where $j,k \in \Z$. 
Then \mbox{$|c|^2 \leq \frac{1}{r\rho}$}, $\medspace$
$|j| \leq \sqrt{\frac{1+\frac{1}{m}}{r\rho}}$ and
 $$\frac{2j}{m+1} - 2\frac{\sqrt{\frac{m+1}{r\rho}-j^2m}}{m+1} \leq k \leq \frac{2j}{m+1} + 2\frac{\sqrt{\frac{m+1}{r\rho}-j^2m}}{m+1}.$$  
\end{lemma}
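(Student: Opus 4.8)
The plan is to extract from the Poincar\'e operation formula the single equation that the matrix entries must satisfy, and then read off all three bounds as consequences of one quadratic inequality in the integer coordinates of $c$.

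First I would apply Lemma~\ref{operationFormula} with $\det\gamma = 1$ to the assignment $(z,r)\mapsto(\zeta,\rho)$. The height component of the formula reads $\rho = r/\bigl(|cz-d|^2 + r^2|c|^2\bigr)$, which rearranges to the exact relation $|cz-d|^2 + r^2|c|^2 = r/\rho$ (this is the same equation that Algorithm~\ref{getVertexIdentifications3mod4} later uses to solve for $d$). Since $|cz-d|^2\geq 0$, discarding that nonnegative term gives $r^2|c|^2 \leq r/\rho$, hence $|c|^2 \leq \frac{1}{r\rho}$, the first assertion. Everything else is a statement purely about this inequality once $c$ is expanded in the integral basis.

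Next I would substitute $c = j + k\omega$ with $\omega = -\frac12 + \frac12\sqrt{-m}$, so that $\mathrm{Re}(c) = j - \frac{k}{2}$ and $\mathrm{Im}(c) = \frac{k}{2}\sqrt{m}$, and compute $|c|^2 = (j-\frac{k}{2})^2 + \frac{m}{4}k^2 = j^2 - jk + \frac{m+1}{4}k^2$. Writing $Q(j,k)$ for this quadratic form, the whole lemma reduces to analysing $Q(j,k)\leq \frac{1}{r\rho}$. For the bound on $j$, I would regard $Q$ as a quadratic in the \emph{real} variable $k$ for fixed $j$; it is minimised at $k = \frac{2j}{m+1}$ with minimal value $\frac{m}{m+1}j^2$. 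Because the genuine integer $k$ satisfies $Q(j,k)\leq \frac{1}{r\rho}$ and the real minimum is no larger than $Q(j,k)$, we obtain $\frac{m}{m+1}j^2 \leq \frac{1}{r\rho}$, i.e. $|j|\leq\sqrt{\frac{1+1/m}{r\rho}}$.

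For the bounds on $k$, I would instead treat $Q(j,k)\leq\frac{1}{r\rho}$ as the quadratic inequality $\frac{m+1}{4}k^2 - jk + \bigl(j^2 - \frac{1}{r\rho}\bigr)\leq 0$ in $k$. Its discriminant simplifies to $\frac{m+1}{r\rho} - mj^2$ (nonnegative exactly in the range of $j$ just established), and its roots are $k = \frac{2j}{m+1} \pm \frac{2}{m+1}\sqrt{\frac{m+1}{r\rho} - mj^2}$; since the leading coefficient is positive, $k$ must lie between these two roots, which is precisely the claimed double inequality. The argument is entirely elementary, so there is no real obstacle; the only points demanding care are the notational clash between the complex coordinates $z,\zeta$ and the heights $r,\rho$, and the correct evaluation of $|c|^2$ in the $m\equiv 3 \mod 4$ basis. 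The lemma is thus the algebraic skeleton guaranteeing that the nested loops over $j$ and $k$ in Algorithm~\ref{getVertexIdentifications3mod4} are finite.
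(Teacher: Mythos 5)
Your proof is correct and follows essentially the same route as the paper's: both derive $|cz-d|^2 + r^2|c|^2 = \frac{r}{\rho}$ from the height component of the Poincar\'e formula, drop the nonnegative term $|cz-d|^2$ to get $|c|^2 \leq \frac{1}{r\rho}$, expand $|c|^2 = j^2 - jk + \frac{m+1}{4}k^2$ in the integral basis, and read off the $k$-range as the interval between the roots of the resulting quadratic. The only cosmetic difference is that you obtain the bound on $j$ from the real minimum $\frac{m}{m+1}j^2$ of that quadratic, whereas the paper obtains it from nonnegativity of the discriminant $\frac{m+1}{r\rho} - j^2 m$ --- two formulations of the identical fact.
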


\begin{proof}
From the operation equation \scriptsize $\mat$\normalsize$\cdot (z,r) = (\zeta,\rho)$, we deduce 
\mbox{$|cz-d|^2 +r^2|c|^2= \frac{r}{\rho}$} and conclude \mbox{$ r^2|c|^2 \leq \frac{r}{\rho} $,} whence the first inequality.
We insert $|c|^2 = \left(j -\frac{k}{2}\right)^2 +m\left(\frac{k}{2}\right)^2$  
                                    $= j^2 +\frac{m+1}{4}k^2 -jk $ \thinspace into it, 
and obtain 
$$0 \geq k^2 -\frac{4j}{m+1}k + \frac{4}{m+1}\left(j^2 -\frac{1}{r\rho}\right) =: f(k).$$
We observe that $f(k)$ is a quadratic function in $k\in \Z \subset \R$, taking its values exclusively in $\R$.
Hence its graph has the shape of a parabola,  and the negative values of $f(k)$ appear exactly on the interval where $k$ is between its two zeroes,
\begin{center} $ k_\pm = \frac{2j}{m+1} \pm 2\frac{\sqrt{\Delta}}{m+1},$ \qquad
where $\Delta = \frac{m+1}{r\rho}-j^2m$. \end{center}
This implies the third and fourth claimed inequalities.
As $k$ is a real number, $\Delta$ must be non-negative in order that $f(k)$ be non-positive.
Hence $j^2 \leq \frac{1+\frac{1}{m}}{r\rho}$, which gives the second claimed inequality.
\end{proof}

\newpage
\begin{lemma} \label{sAndq}
Under the assumptions of lemma \emph{\ref{jAndk}}, write $d$ in the basis as $ q +s\omega$, where $q,s \in \Z$. Write $cz$ in the basis as $R(cz) + W(cz) \omega$, 
where $R(cz), W(cz) \in \rationals$.
Then 
$ W(cz) - 2\sqrt{ \frac{\frac{r}{\rho} -r^2 |c|^2}{m}} \leq s \leq
  W(cz) + 2\sqrt{ \frac{\frac{r}{\rho} -r^2 |c|^2}{m}}$, and
$$ q = R(cz) -\frac{W(cz)}{2} +\frac{s}{2} 
\pm \sqrt{\frac{r}{\rho} -r^2 |c|^2 
-m \left( \frac{W(cz)}{2} -\frac{s}{2}\right)^2}.$$
\end{lemma}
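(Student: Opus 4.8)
The plan is to reproduce the argument of Lemma~\ref{jAndk} almost verbatim, but now extracting the coordinates of $d$ instead of those of $c$. The entire lemma will come out of a single scalar identity. From the operation formula of Lemma~\ref{operationFormula} applied to $\mat \cdot (z,r) = (\zeta,\rho)$ with determinant $1$, one obtains the relation
$$|cz-d|^2 + r^2|c|^2 = \frac{r}{\rho},$$
which was already recorded in the proof of Lemma~\ref{jAndk}. Rewriting it as $|cz-d|^2 = \frac{r}{\rho} - r^2|c|^2$, I would feed everything through the norm formula in the basis $\{1,\omega\}$.

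First I would expand the left-hand side. Writing $cz - d = \bigl(R(cz)-q\bigr) + \bigl(W(cz)-s\bigr)\omega$ and using, for $m \equiv 3 \mod 4$ with $\omega = -\frac{1}{2}+\frac{1}{2}\sqrt{-m}$, the identity $|A + B\omega|^2 = \bigl(A - \frac{B}{2}\bigr)^2 + \frac{m}{4}B^2$ --- the same formula that produced the expression for $|c|^2$ in Lemma~\ref{jAndk} --- the relation becomes
$$\Bigl(R(cz) - q - \tfrac{W(cz)-s}{2}\Bigr)^2 + \frac{m}{4}\bigl(W(cz)-s\bigr)^2 = \frac{r}{\rho} - r^2|c|^2. \qquad (\star)$$

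The bound on $s$ then follows immediately from the non-negativity of the first square in $(\star)$: it forces $\frac{m}{4}(W(cz)-s)^2 \leq \frac{r}{\rho} - r^2|c|^2$, which rearranges to $|W(cz)-s| \leq 2\sqrt{(\frac{r}{\rho} - r^2|c|^2)/m}$, i.e. the first claimed inequality. For the formula for $q$, I would solve $(\star)$ for the remaining square and extract its root, then isolate $q$, obtaining
$$q = R(cz) - \frac{W(cz)}{2} + \frac{s}{2} \pm \sqrt{\frac{r}{\rho} - r^2|c|^2 - \frac{m}{4}\bigl(W(cz)-s\bigr)^2},$$
both signs being retained since the square root is two-valued; noting that $\frac{m}{4}(W(cz)-s)^2 = m\bigl(\frac{W(cz)}{2} - \frac{s}{2}\bigr)^2$ casts the radicand into exactly the stated shape.

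I do not expect a genuine obstacle: the statement is a bookkeeping computation in the integral basis, and its two conclusions are simply the two faces of the quadratic relation $(\star)$ --- the $s$-bound being precisely the condition that the radicand in the $q$-formula stay non-negative, mirroring the role of the discriminant $\Delta$ in Lemma~\ref{jAndk}. The only point requiring care is tracking the $\frac{1}{2}$-shifts introduced by the $\omega$-basis, so that the cross terms of $\bigl(A-\frac{B}{2}\bigr)^2 + \frac{m}{4}B^2$ assemble correctly and the radicand emerges as $m\bigl(\frac{W(cz)}{2}-\frac{s}{2}\bigr)^2$ rather than something off by a constant factor.
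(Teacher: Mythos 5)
Your proposal is correct and follows essentially the same route as the paper: both start from the relation $|cz-d|^2 + r^2|c|^2 = \frac{r}{\rho}$, expand the left-hand side as a sum of two squares, solve for $q$ (your $(\star)$ is literally the paper's expansion, since $\Real(cz) = R(cz) - \frac{W(cz)}{2}$ and $\Imag(cz) = \frac{W(cz)}{2}\sqrt{m}$), and obtain the $s$-bound from non-negativity of the radicand. The only cosmetic difference is that the paper works in real/imaginary coordinates and applies the quadratic formula before converting to the $\{1,\omega\}$-basis at the end, whereas you stay in the basis throughout, which is slightly tidier.
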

\begin{proof}
Recall that $\omega = -\frac{1}{2}+\frac{1}{2}\sqrt{-m}$,
so $\overline{q+s\omega} = q - \frac{s}{2} -\frac{s}{2}\sqrt{-m}$.
The operation equation yields $|cz-d|^2 +r^2|c|^2= \frac{r}{\rho}$. 
From this, we derive 
$$\begin{array}{rl}
 \frac{r}{\rho} -r^2|c|^2 &
= \left(cz -(q+s\omega)\right)\left(\overline{cz} -(q - \frac{s}{2} -\frac{s}{2}\sqrt{-m})\right) \\
 & = \left(\Real(cz) -q +\frac{s}{2}\right)^2 +\left(\Imag(cz) -\frac{s}{2}\sqrt{m}\right)^2 \\
 & = \Real(cz)^2 +q^2 -qs +\frac{s^2}{4} -2 \Real(cz)q +\Real(cz)s +\left(\Imag(cz) -\frac{s}{2}\sqrt{m}\right)^2. 
\end{array}$$
We solve for $q$,
$$
q^2 +\left(-2 \Real(cz) -s \right)q +\left(\Real(cz) +\frac{s}{2}\right)^2 +\left(\Imag(cz) -\frac{s}{2}\sqrt{m}\right)^2 -\frac{r}{\rho} +r^2|c|^2 = 0
$$
and find
\begin{center} $ q_\pm = \Real(cz) +\frac{s}{2} \pm \sqrt{\Delta},$ \qquad
where $\Delta =\frac{r}{\rho} -r^2|c|^2 -\left(\Imag(cz) -\frac{s}{2}\sqrt{m}\right)^2$. \end{center}
We express this as
\begin{center} $ q_\pm = R(cz)  -\frac{W(cz)}{2} +\frac{s}{2} \pm \sqrt{\Delta},$ \qquad
where $\Delta =\frac{r}{\rho} -r^2|c|^2 -m \left( \frac{W(cz)}{2} -\frac{s}{2}\right)^2$, \end{center}
which is the claimed equation. The condition that $q$ must be a rational integer implies $\Delta \geq 0$, which can be rewritten in the claimed inequalities. 
\end{proof}

We further state a simple inequality in order to prove that algorithm~\ref{getVertexIdentifications3mod4}  terminates in finite time.

\begin{lemma} \label{simpleInequation}
Let $K = \rationals(\sqrt{-m})$ with $m \neq 3$. Let $c,z \in K$.
Write their product $cz$ in the $\rationals$-basis $\{ 1, \omega \}$ for $K$ as
\mbox{$R(cz) + W(cz) \omega$.} 
 Then the inequality $|W(cz)| \leq |c|\cdot |z|$ holds.
\end{lemma}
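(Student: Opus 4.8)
The plan is to read off the imaginary part of $cz$ under the standard embedding $K \subset \C$ and to compare it with the elementary inequality $|\Imag(w)| \le |w|$, valid for every $w \in \C$. This reduces the whole statement to a comparison of scalar factors in front of $W(cz)$.

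First I would write $\omega = a + b\sqrt{-m}$ in a uniform shape, with $(a,b) = (0,1)$ when $m \equiv 1$ or $2 \mod 4$, and $(a,b) = (-\frac{1}{2}, \frac{1}{2})$ when $m \equiv 3 \mod 4$. Substituting $cz = R(cz) + W(cz)\,\omega$ and separating the real and imaginary parts then gives $\Imag(cz) = W(cz)\, b\, \sqrt{m}$, so that $|\Imag(cz)| = b\sqrt{m}\,|W(cz)|$.

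Next I would invoke $|\Imag(cz)| \le |cz|$ together with the multiplicativity $|cz| = |c|\cdot|z|$ of the complex absolute value, which yields $b\sqrt{m}\,|W(cz)| \le |c|\cdot|z|$. It then suffices to verify $b\sqrt{m} \ge 1$ in each case. For $m \equiv 1$ or $2 \mod 4$ we have $b\sqrt{m} = \sqrt{m} \ge \sqrt{2} > 1$, since the standing hypothesis $m \neq 1$ forces $m \ge 2$. For $m \equiv 3 \mod 4$ we have $b\sqrt{m} = \frac{\sqrt{m}}{2} \ge \frac{\sqrt{7}}{2} > 1$, and here the hypothesis $m \neq 3$ is exactly what guarantees $m \ge 7$. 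Dividing through by $b\sqrt{m} \ge 1$ delivers $|W(cz)| \le |c|\cdot|z|$.

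There is no genuine obstacle; the computation is routine. The only point that deserves attention --- and the reason the hypothesis $m \neq 3$ sits in the statement --- is the final step in the case $m \equiv 3 \mod 4$: the relevant factor $\frac{\sqrt{m}}{2}$ drops below $1$ precisely at $m = 3$, so that value must be excluded, whereas for every admissible $m$ the factor is at least $1$ and the desired inequality follows immediately.
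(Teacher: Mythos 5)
Your proposal is correct and follows essentially the same route as the paper: both arguments bound the $\omega$-coefficient by the absolute value via the factor $b\sqrt{m}$ (i.e.\ $\sqrt{m}$ or $\frac{\sqrt{m}}{2}$ depending on $m \bmod 4$), check that this factor is at least $1$ using $m \neq 3$, and finish with the multiplicativity $|cz| = |c|\cdot|z|$ under an embedding $K \subset \C$. The only cosmetic difference is that you phrase the key step as $|\Imag(w)| \le |w|$ while the paper writes out $|x+y\omega|$ explicitly and drops the real-part square; also, your appeal to $m \neq 1$ is unnecessary, since $\sqrt{m} \geq 1$ already holds for $m = 1$.
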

\begin{proof}
Let $x + y \omega \in K$  with $x, y \in \rationals$. Our first step is to show that
\mbox{$|y| \leq |x + y \omega|$.}
Consider the case $m \equiv 1 \medspace \mathrm{or} \medspace 2 \mod 4$.
Then $$|x + y \omega| = \sqrt{ x^2 +my^2} \geq \sqrt{m}|y| \geq |y|,$$ and we have shown our claim.
Else consider the case $m \equiv 3 \mod 4$. Then,
$$|x + y \omega| = \sqrt{ (x +\omega y)(x+\overline{\omega}y)} 
= \sqrt{ \left(x^2 -2x\frac{y}{2} +\frac{y^2}{4}\right) +\frac{m}{4}y^2} \geq \frac{\sqrt{m}}{2}|y|,$$
and our claim follows for $m>3$. 
Now we have shown that $|W(cz)| \leq |cz|$; 
and we use some embedding of $K$ into $\C$ to verify the equation 
$|cz| = |c|\cdot|z|$.
\end{proof}

\begin{proposition}
Let $m \equiv 3 \mod 4.$  Then algorithm \emph{\ref{getVertexIdentifications3mod4}} gives all the matrices 
\mbox{\scriptsize \mbox{$\mat$\normalsize$ \in \mathrm{SL}_2(\ringO)$} \normalsize}
 with $c \neq 0$, sending $(z,r)$ to $(\zeta,\rho) \in \Hy$. It terminates in finite time.
\end{proposition}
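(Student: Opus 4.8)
The plan is to establish two independent claims: that the algorithm returns \emph{exactly} the matrices $\mat \in \mathrm{SL}_2(\ringO)$ with $c \neq 0$ carrying $(z,r)$ to $(\zeta,\rho)$, and that it halts. For correctness I would prove both inclusions. The easy inclusion, that every returned matrix is valid, is immediate: the algorithm emits a matrix only after the explicit test $\mat \cdot (z,r) = (\zeta,\rho)$, while membership in $\mathrm{SL}_2(\ringO)$ is secured by the integrality checks on $a$ and $b$, the unimodularity of $(c,d)$, and the determinant relation $b = \frac{ad-1}{c}$ forcing $\det = 1$.

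The substantial inclusion is that every valid matrix is enumerated. Fix such a matrix and write $c = j + k\omega$, $d = q + s\omega$ in the basis. Lemma~\ref{jAndk} shows that $|j| \le \sqrt{\frac{1+\frac{1}{m}}{r\rho}}$ and that $k$ lies between $k_{\mathrm{limit}}^-$ and $k_{\mathrm{limit}}^+$, so the pair $(j,k)$, and hence $c$, is among the candidates swept by the two outer loops; the side condition $|c|^2 \le \frac{1}{r\rho}$ is automatic from the height identity $|cz-d|^2 + r^2|c|^2 = \frac{r}{\rho}$. Lemma~\ref{sAndq} then places $s$ in the interval $[\,W(cz) - 2\sqrt{(\tfrac{r}{\rho}-r^2|c|^2)/m},\ W(cz)+2\sqrt{(\tfrac{r}{\rho}-r^2|c|^2)/m}\,]$ scanned by the $s$-loop and shows that $q$ equals one of the two roots $q_\pm$; since $q \in \Z$ forces the discriminant $\Delta$ to be a rational square, the relevant branch survives the algorithm's tests. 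Thus the correct $c$ and $d$ are produced.

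It then remains to check that, once $c$ and $d$ are correct, the algorithm reconstructs $a$ and $b$ correctly. Here I would solve the complex-coordinate part of the operation formula (Lemma~\ref{operationFormula}, specialised to $\det = 1$ and to $|cz-d|^2 + r^2|c|^2 = \frac{r}{\rho}$) for $a$, tracking the substitution $z' = \zeta$, $\zeta' = \rho$; since $b = \frac{ad-1}{c}$ is linear in $a$, the equation is linear and yields precisely $a = \frac{\rho}{r}\overline{d} - \frac{\rho}{r}\overline{cz} - c\zeta$. I expect this algebraic matching to be the main obstacle, as one must verify that the closed form lands in $K$ so that the subsequent integrality tests are meaningful, and that it agrees with the formula printed in the algorithm. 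Granting it, the fixed valid matrix coincides with one of the outputs, completing the correctness argument.

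For termination I would bound every loop. The $j$-loop is a finite integer range; within it $\Delta = \frac{m+1}{r\rho} - j^2 m \geq 0$ by the bound on $|j|$, so $k_{\mathrm{limit}}^\pm$ are real and the $k$-loop is a finite range. For each surviving $c$ the test $|c|^2 \le \frac{1}{r\rho}$ keeps $s_{\mathrm{limit}}^\pm$ real, and Lemma~\ref{simpleInequation} bounds the centre $W(cz)$ by $|c|\cdot|z|$, so the $s$-interval has bounded endpoints and contains only finitely many integers. Each $s$ yields at most the two values $q_\pm$, and all remaining steps are a fixed amount of arithmetic. Hence only finitely many candidate matrices are examined, and the algorithm halts.
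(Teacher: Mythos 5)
Your proposal is correct and follows essentially the same route as the paper's proof: completeness of the enumeration via Lemmas \ref{jAndk} and \ref{sAndq}, and termination by bounding the three loops through Lemma \ref{simpleInequation} together with $r^2|c|^2 \leq \frac{r}{\rho}$. The paper's own argument is far terser (it declares the first claim ``easily established'' from those lemmata), so your explicit two-inclusion structure and your derivation of $a = \frac{\rho}{r}\overline{d} - \frac{\rho}{r}\overline{cz} - c\zeta$ from the operation formula and $b = \frac{ad-1}{c}$ merely supply details the paper leaves implicit, rather than a different method.
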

\begin{proof} ${}$
\begin{itemize}
\item The first claim is easily established using the bounds and formulae stated in lemmata \ref{jAndk} and \ref{sAndq}.
\item 
Now we consider the run-time. This algorithm consists of three loops the limits of which are at most linear expressions in $\frac{1}{\sqrt{r\rho}}$.
 For $s_{\mathrm{limit}}^\pm$, we use lemma \ref{simpleInequation} and $ r^2|c|^2 \leq \frac{r}{\rho} $ to see this (we get a factor $|z|$ here, which we can neglect).
\end{itemize}
\end{proof}

Finally, it should be said that the scope of computations one can do with geometric models for the Bianchi groups
 does not stop once the integral homology of the full group is known. 
There is further interest in homology with twisted coefficients, congruence subgroups and modular forms 
(see for instance \cite{Sengun}, \cite{SengunTurkelli}).
Currently, Page~\cite{AurelPage}
 is working on optimizing algorithms in order to obtain more cell complexes for Bianchi groups and other Kleinian groups.

\subsection*{Acknowledgements} The author would like to thank Bill Allombert (PARI/GP Development Headquarters)
 and Philippe Elbaz-Vincent (UJF Grenoble) for invaluable help with the development of \textit{Bianchi.gp}.
He is grateful to the Weizmann Institute of Science for providing him its high perfomance computation clusters
 in order to establish the cell complexes database;
and to Stephen S. Gelbart and Graham Ellis for support and encouragement.

\bibliographystyle{amsalpha}
\providecommand{\bysame}{\leavevmode\hbox to3em{\hrulefill}\thinspace}
\providecommand{\MR}{\relax\ifhmode\unskip\space\fi MR }
\providecommand{\MRhref}[2]{%
  \href{http://www.ams.org/mathscinet-getitem?mr=#1}{#2}
}
\providecommand{\href}[2]{#2}

\end{document}